\providecommand{\U}[1]{\protect\rule{.1in}{.1in}}
\newtheorem{theorem}{Theorem}
\newtheorem{corollary}[theorem]{Corollary}
\newtheorem{definition}[theorem]{Definition}
\newtheorem{lemma}[theorem]{Lemma}
\newtheorem{proposition}[theorem]{Proposition}
\newtheorem{remark}[theorem]{Remark}
\begin{document}
\title{An $A_{\infty}$-coalgebra Structure on a Closed Compact Surface}
\author{Quinn Minnich$^{1}$}
\address{Millersville University of Pennsylvania\\
Department of Mathematics \\
Millersville, PA 17551}
\author{Ronald Umble}
\email{ron.umble@millersville.edu}
\thanks{$^{1}$ The main result (Theorem \ref{Main Result}) was proved by the first
author in his senior honors thesis \cite{Minnich}.}
\date{May 2, 2018}
\subjclass{Primary 57N05, 57N65; Secondary 55P35}
\keywords{$A_{\infty}$-coalgebra, associahedron, non-orientable surface}

\begin{abstract}
Let $P$ be an $n$-gon with $n\geq3.$ There is a formal combinatorial $A_\infty$-coalgebra structure on cellular chains $C_*(P)$ with non-vanishing higher order structure when $n\geq5$. If $X_g$ is a closed compact surface of genus $g\geq2$ and $P_g$ is a polygonal decomposition, the quotient map $q:P_g\to X_g$ projects the formal $A_\infty$-coalgebra structure on $C_*(P_g)$ to a quotient structure on $C_*(X_g)$, which persists to homology $H_{\ast}\left(  X_g;\mathbb{Z}_{2}\right) $, whose operations are determined by the quotient map $q$, and whose higher order structure is non-trivial if and only if $X_g$ is orientable or unorientable with $g\geq3$. But whether or not the $A_{\infty}$-coalgebra structure on homology observed here is topologically invariant is an open question.
\vspace{.2in}
\begin{center}
\textit{To Tornike Kadeishvili on the occasion of his 70th birthday} 
\end{center}
\end{abstract}
\maketitle

\section{Introduction}

Let $R$ be a commutative ring with unity and let $P$ be an $n$-gon with $n\geq3$. In this paper we construct a formal combinatorial $A_{\infty}$-coalgebra structure on the cellular chains of $P$, denoted by $C_{\ast}(P)$, which is the graded $R$-module generated by the vertices, edges, and region of $P$. For an application,
let $X_g$ be a closed compact surface of genus $g\geq2$ and let $P_g$ be a polygonal decomposition. The quotient map $q:P_g\to X_g$ sends the formal $A_\infty$-coalgebra structure on $C_*(P_g)$ to a quotient structure on $C_*(X_g)$, which persists to homology $H_{\ast}\left(  X_g;\mathbb{Z}_{2}\right) $, whose operations are determined by the quotient map $q$, and whose higher order structure is non-trivial if and only if $X_g$ is orientable or unorientable with $g\geq3$.

An $A_{\infty}$-coalgebra is the linear dual of an $A_{\infty}$-algebra
defined by J. Stasheff \cite{Stasheff} in the setting of base pointed loop
spaces. As motivation, we begin with a brief description of $A_{\infty}$-algebras.

Let $S$ be a surface embedded in $\mathbb{R}^{3}$ and let $\ast$ be some
specified base point on $S.$ A \emph{base pointed loop on} $S$ is a continuous
map $\alpha:I\rightarrow S$ such that $\alpha\left(  0\right)  =\alpha\left(
1\right)  =\ast.$ Let $\Omega S$ denote the space of all base pointed loops on
$S.$ Given $\alpha,\beta\in\Omega S,$ define their \emph{product} $\alpha
\cdot\beta\in\Omega S$ to be
\[
\left(  \alpha\cdot\beta\right)  \left(  t\right)  =\left\{
\begin{array}
[c]{ll}%
\alpha\left(  2t\right)  , & t\in[0,\frac{1}{2}]\\
\beta\left(  2t-1\right)  , & t\in[\frac{1}{2},1].
\end{array}
\right.
\]
A \emph{homotopy from }$\alpha$ \emph{to} $\beta$ is a continuous map
$H:I\rightarrow\Omega S$ such that $H\left(  0\right)  =\alpha$ and $H\left(
1\right)  =\beta.$ Thus $\left\{  H\left(  s\right)  :s\in I\right\}  $ is a
$1$-parameter family of loops that continuously deforms $\alpha$ to $\beta$.

Let $\alpha,\beta,\gamma\in\Omega S$. Although $\left(  \alpha\cdot
\beta\right)  \cdot\gamma\neq\alpha\cdot\left(  \beta\cdot\gamma\right)  ,$
the loops $\left(  \alpha\cdot\beta\right)  \cdot\gamma$ and $\alpha
\cdot\left(  \beta\cdot\gamma\right)  $ are homotopic via a linear change of
parameter homotopy $H$. Let $\mathbf{1}:\Omega S\rightarrow\Omega S$ be the
identity map and define $m_{2}:\Omega S\otimes\Omega S\rightarrow\Omega S$ by
$m_{2}\left(  \alpha\otimes\beta\right)  =\alpha\cdot\beta.$ Then
$m_{2}\left(  m_{2}\otimes\mathbf{1}\right)  \left(  \alpha\otimes\beta
\otimes\gamma\right)  =\left(  \alpha\cdot\beta\right)  \cdot\gamma$ and
$m_{2}\left(  \mathbf{1}\otimes m_{2}\right)  \left(  \alpha\otimes
\beta\otimes\gamma\right)  =\alpha\cdot\left(  \beta\cdot\gamma\right)  .$
Consider $m_{2}\left(  m_{2}\otimes\mathbf{1}\right)  ,m_{2}\left(
\mathbf{1}\otimes m_{2}\right)  :\Omega S^{\otimes3}\rightarrow\Omega S$ and
think of the homotopy $H$ from $\left(  \alpha\cdot\beta\right)  \cdot\gamma$
to $\alpha\cdot\left(  \beta\cdot\gamma\right)  $ as a $3$-ary operation
$m_{3}:\Omega S^{\otimes3}\rightarrow\Omega S.$ Identify $m_{3}$ with the
interval $I,$ its endpoint $0$ with $m_{2}\left(
m_{2}\otimes\mathbf{1}\right)  ,$ and its endpoint $1$ with $m_{2}\left(
\mathbf{1}\otimes m_{2}\right)  .$ Then the boundary $\partial m_{3}%
=m_{2}\left(  \mathbf{1}\otimes m_{2}\right)  -m_{2}\left(  m_{2}%
\otimes\mathbf{1}\right)  $ and the parameter space $I  $
identified with $m_{3}$ is called the associahedron $K_{3}.$ Thus $K_{3}$
controls homotopy associativity in three variables.

In a similar way, homotopy associativity in four variables is controlled by
the associahedron $K_{4},$ which is a pentagon. The vertices of $K_{4}$ are
identified with the five ways one can parenthesize four variables, its edges
are identified with the homotopies that preform a single shift of parentheses,
and its $2$-dimensional region is identified with a $4$-ary operation $m_{4}:$
$\Omega S^{\otimes4}\rightarrow\Omega S.$ Thus $\partial m_{4}=m_{2}\left(
m_{3}\otimes\mathbf{1}\right)  -m_{3}\left(  m_{2}\otimes\mathbf{1}%
\otimes\mathbf{1}\right)  +m_{3}\left(  \mathbf{1}\otimes m_{2}\otimes
\mathbf{1}\right)  -m_{3}\left(  \mathbf{1}\otimes\mathbf{1}\otimes
m_{2}\right)  +m_{2}\left(  \mathbf{1}\otimes m_{3}\right)  .$ In general,
homotopy associativity in $n$ variables is controlled by the associahedron
$K_{n},$ which is an $n-2$ dimensional polytope whose vertices are identified
with the various ways to parenthesize $n$ variables. While associahedra are
independently interesting geometric objects, they also organize the data in
the definition of an $A_{\infty}$-(co)algebra.

\begin{center}
\includegraphics[height = 5cm]{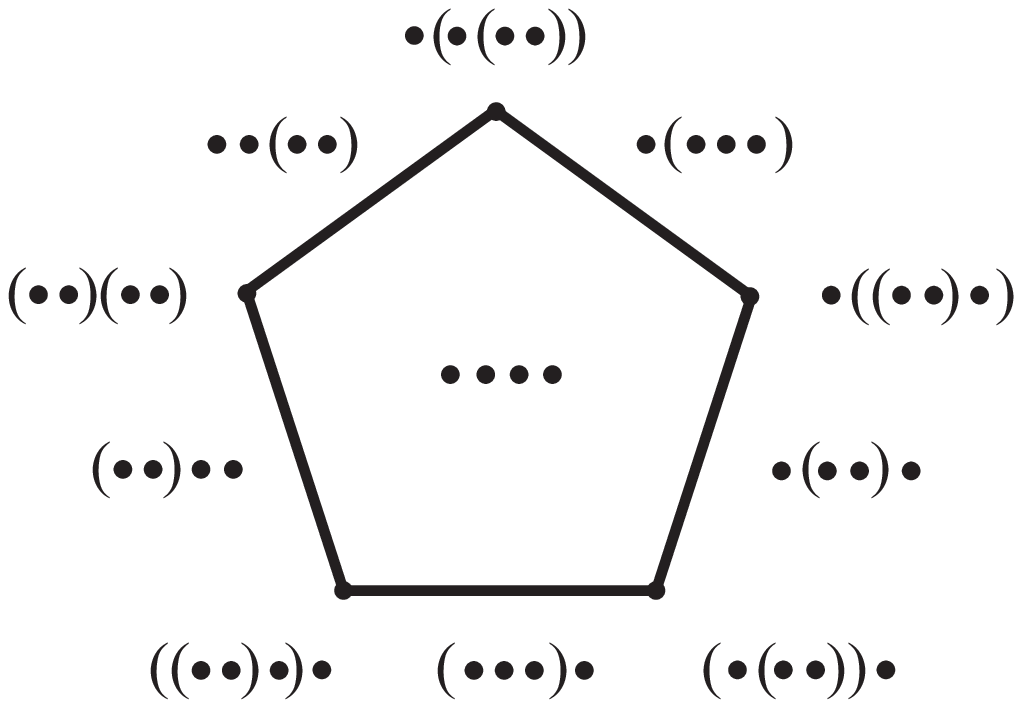} \\[0pt]Figure 1. The associahedron
$K_{4}.$
\end{center}

But before we can define an $A_{\infty}$-(co)algebra, we need some
preliminaries. A \emph{differential graded (d.g.) $R$-module} is a graded $R$-module 
$V=\oplus_{i\geq0}V_{i}$ equipped with a differential operator
$\partial:V_{\ast}\rightarrow V_{\ast-1}$ such that $\partial\circ\partial=0.$
Let $\left(  V,\partial_{V}\right)  $ and $\left(  W,\partial_{W}\right)  $ be
d.g. $R$-modules. A linear map $f:V\rightarrow W$ has \emph{degree} $p$ if
$f:V_{i}\rightarrow W_{i+p};$ the map $f$ is a \emph{chain map} if
$f\circ\partial_{V}=\left(  -1\right)  ^{p}\partial_{W}\circ f.$ Denote the
degree of $f$ by $\left\vert f\right\vert $ and the $R$-module of all linear
maps of degree $p$ by $Hom_{p}\left(  V,W\right)  $.

\begin{proposition}
$Hom_{\ast}\left(  V,W\right)  $ is a d.g. $R$-module with differential
$\delta$ given by $\delta\left(  f\right)  =f\circ\partial_{V}-\left(
-1\right)  ^{|f|}\partial_{W}\circ f.$
\end{proposition}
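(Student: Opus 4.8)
The plan is to verify the two defining properties of a differential graded $R$-module in turn: that $\delta$ lowers degree by one and squares to zero. The graded $R$-module structure on $Hom_{\ast}\left(  V,W\right)  =\bigoplus_{p}Hom_{p}\left(  V,W\right)  $ is immediate from pointwise addition and scalar multiplication of linear maps, so nothing needs checking there.

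First I would confirm that $\delta$ is well defined and has degree $-1$. If $f\in Hom_{p}\left(  V,W\right)  $, then since $\partial_{V}$ and $\partial_{W}$ each have degree $-1$, both $f\circ\partial_{V}$ and $\partial_{W}\circ f$ send $V_{i}\rightarrow W_{i+p-1}$; hence $\delta\left(  f\right)  =f\circ\partial_{V}-\left(  -1\right)  ^{|f|}\partial_{W}\circ f$ again lies in $Hom_{p-1}\left(  V,W\right)  $. Additivity of $\delta$ in $f$ and its compatibility with the $R$-action are clear from linearity of composition.

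The substantive step is the computation $\delta\circ\delta=0$. Writing $p=|f|$ and noting $|\delta\left(  f\right)  |=p-1$, I would expand
\[
\delta\left(  \delta\left(  f\right)  \right)  =\bigl(f\partial_{V}-\left(  -1\right)  ^{p}\partial_{W}f\bigr)\partial_{V}-\left(  -1\right)  ^{p-1}\partial_{W}\bigl(f\partial_{V}-\left(  -1\right)  ^{p}\partial_{W}f\bigr).
\]
Multiplying out yields four terms: $f\,\partial_{V}\partial_{V}$, which vanishes because $\partial_{V}\circ\partial_{V}=0$; the term $\left(  -1\right)  ^{p-1}\left(  -1\right)  ^{p}\partial_{W}\partial_{W}f$, which vanishes because $\partial_{W}\circ\partial_{W}=0$; and the two cross terms $-\left(  -1\right)  ^{p}\partial_{W}f\,\partial_{V}$ and $-\left(  -1\right)  ^{p-1}\partial_{W}f\,\partial_{V}$, whose coefficients are negatives of one another since $-\left(  -1\right)  ^{p-1}=\left(  -1\right)  ^{p}$, so they cancel. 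Hence $\delta\left(  \delta\left(  f\right)  \right)  =0$.

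The only point requiring care is the sign bookkeeping in this last expansion — in particular keeping track of the shifted degree $|\delta(f)|=|f|-1$ when applying $\delta$ a second time; everything else is formal. Once $\delta\circ\delta=0$ is established, the proposition follows.
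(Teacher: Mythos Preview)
Your argument is correct; the paper itself simply declares the proof ``straight-forward and omitted,'' and what you have written is precisely the standard direct verification the authors had in mind. Your sign bookkeeping in the expansion of $\delta(\delta(f))$ is accurate, so there is nothing to add.
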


\begin{proof}
The proof is straight-forward and omitted.
\end{proof}

\noindent Note that $f$ is a chain map if and only if $\delta(f)=0$. Now
$m_{2}\in Hom_{\ast}( C_{\ast}\left(  P\right)  ^{\otimes2},C_{\ast}\left(
P\right)  ) $ and $m_{3}\in Hom_{\ast}( C_{\ast}\left(  P\right)  ^{\otimes
3},C_{\ast}\left(  P\right)  ) $. Since $|m_{3}|=1$ we have
\[
\delta(m_{3})=m_{3}\circ\partial^{\otimes3}-\left(  -1\right)  ^{1}%
\partial\circ m_{3}=\partial\circ m_{3}=m_{2}(\mathbf{1}\otimes m_{2}%
)-m_{2}(m_{2}\otimes\mathbf{1}),
\]
where $m_{3}\circ\partial^{\otimes3}=0$ because loops have empty boundary.
Then $\delta(m_{3})$ measures the \emph{deviation of} $m_{2}$ \emph{from
associativity}, and in certain situations one can express this deviation in
terms of a degree $0$ chain map $\alpha_{3}:C_{\ast}\left(  K_{3}\right)
\rightarrow Hom_{\ast}( C_{\ast}\left(  P\right)  ^{\otimes3},C_{\ast}\left(
P\right)  ) $. Let $\theta_n$ denote the top dimensional cell of $K_n$.

\begin{definition}
Let $(V,\partial)$ be a d.g. $R$-module. For each $n\geq2$, choose a map
$\alpha_{n}:C_{\ast}(K_{n})\rightarrow Hom(V^{\otimes n},V)$ of degree $0,$
and let $m_{n}=\alpha_{n}(\theta_{n}).$ Then $(V,\partial,m_{2}%
,m_{3},\dots)$ is an $A_{\infty}$-\textbf{algebra} if 
$\delta\alpha_n=\alpha_n\partial$ for each $n\geq2$.
\end{definition}

The definition of an $A_{\infty}$-coalgebra mirrors the definition of an
$A_{\infty}$-algebra.

\begin{definition}\label{defn3}
Let $\left(  V,\partial\right)  $ be a d.g $R$-module. For each $n\geq2$,
choose a map $\alpha_{n}:C_{\ast}\left(  K_{n}\right)  \rightarrow Hom\left(
V,V^{\otimes n}\right)  $ of degree $0,$ and let $\Delta_{n}=\alpha
_{n}\left(  \theta_{n}\right)  .$ Then $\left(  V,\partial,\Delta_{2}%
,\Delta_{3},\ldots\right)  $ is an $A_{\infty}$-\textbf{coalgebra} if 
$\delta\alpha_n  =\alpha_n\partial$ for each
$n\geq2.$
\end{definition}

\noindent Evaluating both sides of the equation in Definition \ref{defn3}
at $\theta_n$ produces the classical structure relation\bigskip

$\Delta_{n}\partial-\left(  -1\right)  ^{n-2}%
{\displaystyle\sum\limits_{i=0}^{n-1}}
\left(  \mathbf{1}^{\otimes i}\otimes\partial\otimes\mathbf{1}^{\otimes n-i-1}\right)
\Delta_{n}$
\begin{equation}\label{relation}
=\sum_{i=1}^{n-2}\sum_{j=0}^{n-i-1}\left(  -1\right)  ^{i\left(  j+n+1\right)
}\left(  \mathbf{1}^{\otimes j}\otimes\Delta_{i+1}\otimes\mathbf{1}^{\otimes n-i-j-1}\right)
\Delta_{n-i},
\end{equation}
which expresses $\Delta_{n}$ as a chain homotopy among the quadratic
compositions encoded by the codimension $1$ cells of $K_{n}$ (see Figure 2).
The sign $\left(  -1\right)  ^{i\left(  j+n+1\right)  }$ is the combinatorial
sign derived by Saneblidze and Umble in \cite{Saneblidze}. Note that when
$n=2$, Relation \ref{relation} has the form $\Delta_{2}\partial=\left(
\partial\otimes\mathbf{1}+\mathbf{1}\otimes\partial\right)  \Delta_{2}$, which
is dual to the Leibniz Rule $dm=m\left(  d\otimes\mathbf{1}+\mathbf{1}\otimes d\right)  $ in
calculus and says that $\partial$ is a coderivation of $\Delta_{2}.$

\bigskip
\begin{center}
\includegraphics[
height=1.7521in,
width=2.9888in
]%
{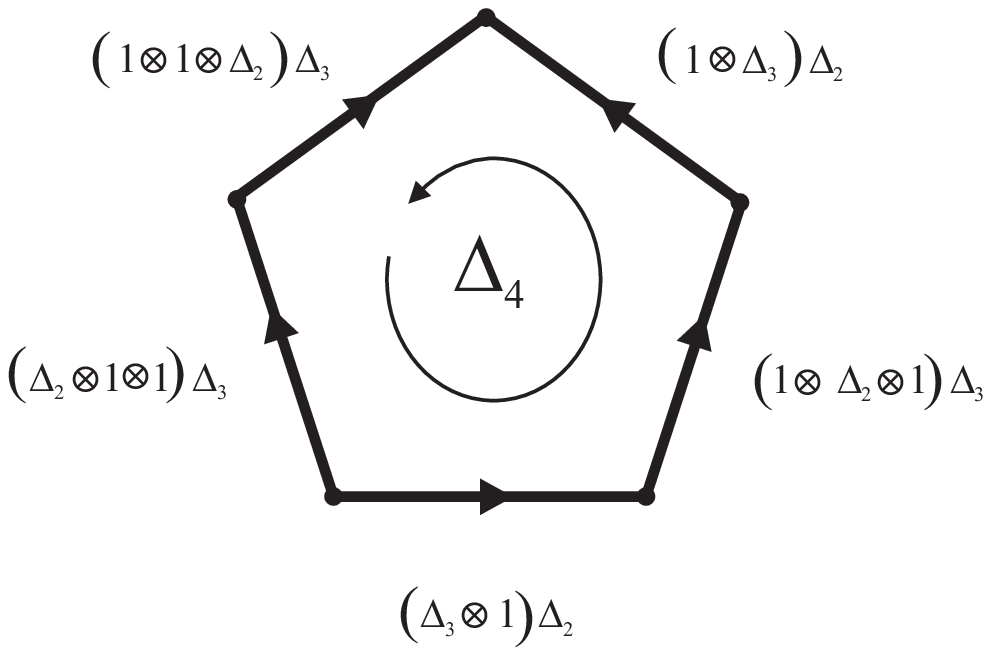}%

Figure 2. The quadratic compositions encoded by the codim $1$ cells of $K_{4}$.
\end{center}

\section{Statement of the Main Result}

Let $P$ be a counterclockwise oriented $n$-gon, $n\geq3$. Label the vertices
$v_{1},v_{2},\ldots, v_{n}$ and the edges $e_{1},e_{2},\ldots, e_{n}$. Define
$v_{1}$ to be the \emph{initial vertex} and $v_{n}$ to be the \emph{terminal
vertex}, and direct the edges from $v_{1}$ to $v_{n}$. This assignment
partially orders the vertices as indicated in Figure 3. Edges whose direction
is consistent with orientation are positive. Let $\partial:C_{\ast}\left(
P\right)  \rightarrow C_{\ast-1}\left(  P\right)  $ be the boundary operator
induced by geometric boundary. Then $\partial(v_{i})=0$, $\partial
(e_{i})=v_{i+1}-v_{i}$ if $i<n$, $\partial(e_{n})=v_{n}-v_{1}$, and
$\partial(p)=e_{1}+e_{2}+\cdots+e_{n-1}-e_{n}$.

\begin{center}
\includegraphics[height = 5cm, width = 5.5cm]{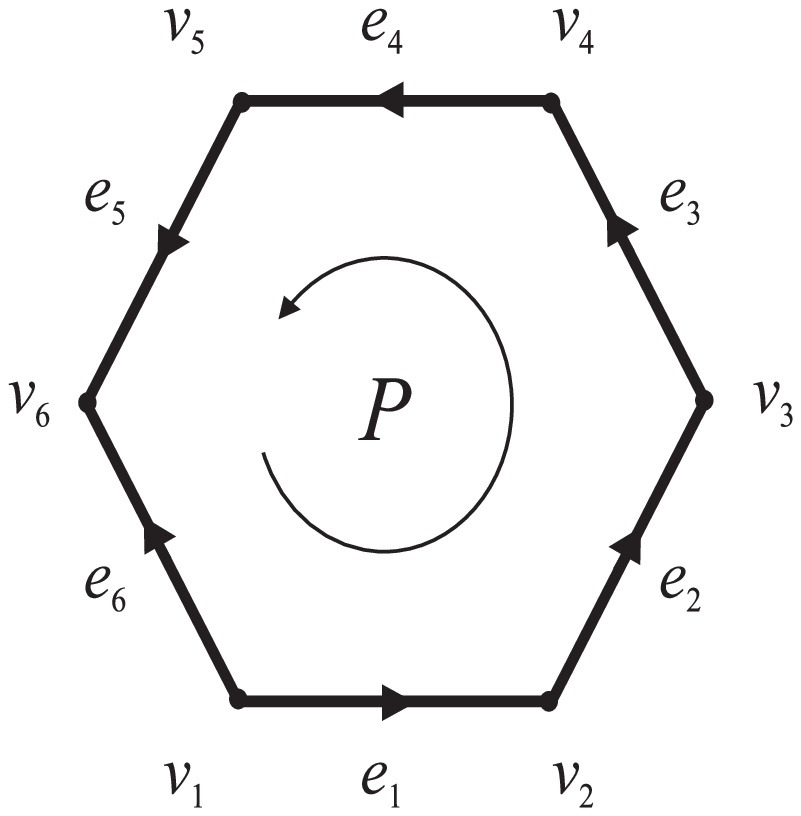} \\[0pt]Figure 3.
An $n$-gon for $n=6$
\end{center}

We will use the diagonal approximation $\Delta_{2}:C_{\ast}\left(  P\right)
\rightarrow C_{\ast}\left(  P\right)  \otimes C_{\ast}\left(  P\right)  $
defined by D. Kravatz in \cite{Kravatz} and given by%
\begin{align*}
&  \Delta_{2}(P)=v_{1}\otimes P+P\otimes v_{n}+\sum_{0<i_{1}<i_{2}<n}e_{i_{1}%
}\otimes e_{i_{2}},\\
&  \Delta_{2}(e_{i})=v_{i}\otimes e_{i}+e_{i}\otimes v_{i+1}\text{ if }i<n,\\
&  \Delta_{2}(e_{n})=v_{1}\otimes e_{n}+e_{n}\otimes v_{n},\\
&  \Delta_{2}(v_{i})=v_{i}\otimes v_{i}.
\end{align*}
A more general exposition of Kravatz's diagonal appears in \cite{Gonzalay}.
For $k>2,$ define the $k$-ary $A_{\infty}$-coalgebra operation $\Delta
_{k}:C_{\ast}\left(  P\right)  \rightarrow C_{\ast}\left(  P\right)  ^{\otimes
k}$ by
\[
\Delta_{k}(\sigma)=\left\{
\begin{array}
[c]{cl}%
\sum\limits_{0<i_{1}<i_{2}<\cdots<i_{k}<n}e_{i_{1}}\otimes e_{i_{2}}%
\otimes\cdots\otimes e_{i_{k}}, & \text{if }\sigma=P,\\
0, & \text{otherwise.}%
\end{array}
\right.
\]
Then by definition, $\Delta_{k}=0$ for all $k\geq n.$ We can now state our
main result. 

\begin{theorem}
\label{Main Result}Let $P$ be an $n$-gon. The structure operations $\left\{
\Delta_{k}\right\}  _{2\leq k<n}$ defined above impose an $A_{\infty}%
$-coalgebra structure on $\left(  C_{\ast}\left(  P\right)  ,\partial\right)
.$ Furthermore, $\Delta_{k}$ vanishes for all $k\geq n$.
\end{theorem}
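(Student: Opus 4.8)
The plan is to verify the structure relation \eqref{relation} directly for every generator of $C_*(P)$ and every $n \geq 2$, treating the vanishing claim $\Delta_k = 0$ for $k \geq n$ as essentially immediate from the definition. The key observation that organizes everything is that all the higher operations $\Delta_k$ ($k > 2$) are supported only on the top cell $P$ and land in the subspace spanned by tensor products of edges; moreover $\Delta_2$ restricted to edges and vertices is the standard simplicial-type coproduct, while $\Delta_2(P)$ has three pieces: $v_1 \otimes P$, $P \otimes v_n$, and the ``double sum'' $\sum_{0 < i_1 < i_2 < n} e_{i_1} \otimes e_{i_2}$. So the whole verification splits into cases according to the input generator $\sigma \in \{v_i, e_i, P\}$.

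First I would dispose of the easy inputs. For $\sigma = v_i$ both sides of \eqref{relation} vanish since $\partial v_i = 0$, $\Delta_k(v_i) = 0$ for $k \geq 3$, and $\Delta_2(v_i) = v_i \otimes v_i$ is a cocycle; the right-hand side is empty because it only involves $\Delta_{i+1}$ with $i \geq 1$ composed with $\Delta_{n-i}$, and $\Delta_2(v_i)$ feeds only into $v_i$'s again. For $\sigma = e_i$ the relation reduces to the $n=2$ case (coassociativity/Leibniz), which is a short direct check using $\partial e_i = v_{i+1} - v_i$ and the two-term formula for $\Delta_2(e_i)$; here one must be slightly careful with the boundary edge $e_n$, whose formulas differ. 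The substantive content is entirely in the case $\sigma = P$. Here I would compute, for each fixed $n$, the left-hand side $\Delta_n \partial(P) - (-1)^{n-2}\sum_i (\mathbf 1^{\otimes i} \otimes \partial \otimes \mathbf 1^{\otimes n-i-1})\Delta_n(P)$ and the right-hand side $\sum_{i,j} (-1)^{i(j+n+1)} (\mathbf 1^{\otimes j} \otimes \Delta_{i+1} \otimes \mathbf 1^{\otimes n-i-j-1})\Delta_{n-i}(P)$, and match them term by term.

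The mechanism that makes the matching work is a bijection between index patterns. On the left, applying $\partial$ in one slot of $\Delta_n(P) = \sum e_{i_1} \otimes \cdots \otimes e_{i_n}$ turns an edge $e_{i_\ell}$ into $v_{i_\ell+1} - v_{i_\ell}$; such a term survives the alternating sum only when two consecutive indices are ``adjacent'' in the allowed range, and these surviving terms are exactly what $\Delta_{n-i}(P)$ followed by a $\Delta_{i+1}$-refinement in one slot produces on the right, since $\Delta_2(e_j) = v_j \otimes e_j + e_j \otimes v_{j+1}$ inserts a vertex next to an edge and $\Delta_{i+1}$ for $i \geq 2$ splits one $e$ of $\Delta_{n-i}(P)$ into a longer increasing chain of $e$'s. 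The term $\Delta_n \partial(P)$, using $\partial P = e_1 + \cdots + e_{n-1} - e_n$, contributes the parts where the refinement happens at the very ends (reaching $e_1$, hence $v_1$, or $e_n$, hence $v_n$), which is also where the $v_1 \otimes P$ and $P \otimes v_n$ summands of $\Delta_2(P)$ in the $i = n-2$ term on the right come into play. So the proof is a careful bookkeeping argument: enumerate the monomials appearing on each side, exhibit the index bijection, and check that the Saneblidze--Umble sign $(-1)^{i(j+n+1)}$ together with the sign $(-1)^{n-2}$ and the internal signs of $\partial$ make corresponding monomials cancel or agree.

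The main obstacle I anticipate is precisely the sign bookkeeping, not the combinatorics of which monomials appear. Getting the degrees right ($|\Delta_k| = k-2$, so the Koszul signs in $(\mathbf 1^{\otimes j} \otimes \partial \otimes \mathbf 1^{\otimes \cdots})\Delta_n$ and in composing $\Delta_{i+1}$ into a slot of $\Delta_{n-i}$ are nontrivial) and reconciling them with $(-1)^{i(j+n+1)}$ will require care, and the special edge $e_n$ with its reversed orientation and its endpoint $v_1$ rather than $v_n$ creates a genuinely different-looking term that must still fit the pattern. A clean way to control this is to first prove the identity formally at the level of the cobar/tensor-coalgebra — i.e., show that $\sum_k \Delta_k$ assembles into a single coderivation-like map whose square encodes \eqref{relation} for all $n$ at once — and then only verify the generating cases $\sigma \in \{v_i, e_i, P\}$, letting the sign conventions of \cite{Saneblidze} do the rest; alternatively, one proves \eqref{relation} for small $n$ by hand and then induces on $n$ using the nested structure of the $\Delta_k(P)$ as truncations of one another. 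I would likely present the direct case analysis for concreteness, isolating the sign verification into a short lemma.
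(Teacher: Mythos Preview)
Your overall strategy---reduce to the $2$-cell $P$, observe that $\Delta_j\Delta_k(P)=0$ whenever both $j,k\geq 3$, and then match the monomials on the two sides of the relation by an explicit bijection---is precisely the paper's approach. The paper organizes the matching via a classification of the surviving monomials (``left/right attached,'' ``doubly attached,'' ``unattached,'' ``extreme''), proves two lemmas determining exactly how each class arises in $\partial\Delta_k(P)$ and in $\Delta_2\Delta_{k-1}(P)$, and then shows the three pieces $\partial\Delta_k$, $\Delta_2\Delta_{k-1}$, $\Delta_{k-1}\Delta_2$ cancel class by class; it then handles $k=3$ separately because the primitive terms of $\Delta_2(P)$ interfere. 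Your sketch is an informal version of the same argument.

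There is, however, one concrete error in your accounting. You write that ``the term $\Delta_n\partial(P)$, using $\partial P=e_1+\cdots+e_{n-1}-e_n$, contributes the parts where the refinement happens at the very ends.'' But $\Delta_k$ vanishes on edges for every $k\geq 3$, so $\Delta_k\partial(P)=0$ identically for the relevant $k$, and this term contributes \emph{nothing}. The extreme monomials (those beginning with $v_1$ or ending with $v_n$) do not come from the left-hand side at all; they are produced on the right-hand side by $\Delta_{k-1}\Delta_2(P)$ acting on the primitive summands $v_1\otimes P+P\otimes v_n$ of $\Delta_2(P)$, and they cancel against the extreme unattached monomials that survive inside $\partial\Delta_k(P)$ and against the extreme singly-attached monomials in $\Delta_2\Delta_{k-1}(P)$. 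Your sentence correctly flags that $v_1\otimes P$ and $P\otimes v_n$ ``come into play,'' but misidentifies their partner. Relatedly, your description ``survives the alternating sum only when two consecutive indices are adjacent'' is not quite right: unattached non-extreme monomials cancel internally in $\partial\Delta_k(P)$, but the extreme unattached ones survive and must be cancelled externally. If you revise your term-matching to route the extreme terms through $\Delta_{k-1}\Delta_2$ rather than through $\Delta_k\partial$, your argument will close up and coincide with the paper's.
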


\section{Proof of the Main Result}

We must verify Relation \ref{relation} for all $k\geq2.$ When $k=2$,
verification is easy and left to the reader. We first verify Relation
\ref{relation} for all $k>3$, then consider the special case $k=3.$ To
simplify notation we establish following notational devices:

\begin{itemize}
\item The symbol $\partial\Delta_{k}(P)$ denotes $\sum_{i=0}^{k-1}%
(\mathbf{1}^{\otimes i}\mathbf{\otimes}\partial\otimes\mathbf{1}^{\otimes
k-i-1})\Delta_{k}(P).$

\item The symbol $\Delta_{j}\Delta_{k}(P)$ denotes $\sum_{i=0}^{k-1}%
(\mathbf{1}^{\otimes i}\mathbf{\otimes}\Delta_{j}\otimes\mathbf{1}^{\otimes
k-i-1})\Delta_{k}(P).$
\end{itemize}

The fact that $\Delta_{j}$ and $\Delta_{k}$ vanish on edges and vertices when
$j,k\geq3$ implies:

\begin{proposition}
\label{compositions}$\Delta_{j}\Delta_{k}(P)=\partial\Delta_{k}(P)=0$ whenever
$i,j\geq3.$
\end{proposition}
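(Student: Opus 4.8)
The plan is to observe that the higher structure operations $\Delta_j,\Delta_k$ with $j,k\geq 3$ are concentrated entirely on the top cell $P$: by definition $\Delta_k(\sigma)=0$ unless $\sigma=P$, and $\Delta_k(P)$ is a sum of tensors $e_{i_1}\otimes\cdots\otimes e_{i_k}$, each factor of which is an edge. So I would first unwind the notational devices. In $\Delta_j\Delta_k(P)=\sum_{i=0}^{k-1}(\mathbf{1}^{\otimes i}\otimes\Delta_j\otimes\mathbf{1}^{\otimes k-i-1})\Delta_k(P)$, the inner map $\Delta_k$ produces a sum of elementary tensors whose entries are all edges $e_{i_\ell}$; then applying $\Delta_j$ (with $j\geq 3$) to the slot in position $i+1$ means evaluating $\Delta_j$ on an edge, which is $0$. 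Hence every summand vanishes and $\Delta_j\Delta_k(P)=0$. The same reasoning kills $\partial\Delta_k(P)$: here $\partial$ is applied to each edge factor, and $\partial(e_{i_\ell})=v_{i_\ell+1}-v_{i_\ell}$ (or $v_n-v_1$), which is nonzero, so this part needs a slightly different argument — actually I should double-check, since the claim as stated asserts $\partial\Delta_k(P)=0$.

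Re-examining: $\partial\Delta_k(P)$ in the sense of the device is $\sum_{i=0}^{k-1}(\mathbf 1^{\otimes i}\otimes\partial\otimes\mathbf 1^{\otimes k-i-1})\Delta_k(P)$, and this is genuinely nonzero as a chain (it is the ordinary tensor-differential of a sum of products of edges). So the intended reading must be different. I suspect the statement means: the \emph{composite contributions} $\Delta_j\Delta_k$ for $j,k\geq 3$ contribute nothing to Relation~\ref{relation}, i.e. these terms do not appear on the right-hand side because the structure forces them to vanish, while the only surviving right-hand terms involve $\Delta_2$. So the real content I would prove is: in Relation~\ref{relation} with $n$ replaced by $k$, every term $(\mathbf 1^{\otimes j}\otimes\Delta_{i+1}\otimes\mathbf 1^{\otimes k-i-j-1})\Delta_{k-i}$ with both $i+1\geq 3$ and $k-i\geq 3$ is zero, because $\Delta_{k-i}(P)$ outputs only edge-tensors and $\Delta_{i+1}$ with $i+1\geq 3$ annihilates edges. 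That leaves, for each $k$, only the terms where $i=1$ (so $\Delta_{i+1}=\Delta_2$ is applied, to an edge factor of $\Delta_{k-1}(P)$, which is allowed) or $k-i=2$ (so $\Delta_2(P)$ is taken first and then $\Delta_{i+1}$, $i+1=k-1\geq 3$, is applied — but here one must check $\Delta_{k-1}$ lands on a slot that is $P$, since $\Delta_2(P)$ has summands $v_1\otimes P$, $P\otimes v_n$, and $\sum e_{i_1}\otimes e_{i_2}$).

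Concretely, the steps in order: (1) State precisely that for $j\geq 3$, $\Delta_j$ restricted to the submodule generated by vertices and edges is zero, and that $\Delta_k(P)$ lies in the span of tensors with every tensor factor an edge. (2) For the $\Delta_j\Delta_k$ device with $j,k\geq 3$: each summand applies $\Delta_j$ to an edge entry of an edge-tensor, giving $0$; conclude the whole device is $0$. (3) For the $\partial\Delta_k$ assertion (if it really is meant literally): note $\partial$ applied to $e_{i_1}\otimes\cdots\otimes e_{i_k}$ yields a telescoping-type sum, and then argue either that it cancels or — more likely — reinterpret the claim so that "$\partial\Delta_k(P)=0$" is shorthand for "the term $\partial\Delta_k$ does not contribute beyond what pairs with $\Delta_k\partial$," i.e. $\Delta_k\partial(P)-(-1)^{k-2}\,\partial\Delta_k(P)$ is what must equal the RHS, and one shows $\Delta_k\partial(P)$ and the RHS match after accounting for $\partial\Delta_k(P)$. (4) Record the surviving terms of Relation~\ref{relation} as only those with $\Delta_2$ in one of the two factor positions.

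The main obstacle I anticipate is pinning down exactly which reading of "$\partial\Delta_k(P)=0$" is intended and, with the correct reading, verifying the sign bookkeeping — that the $\Delta_2$-with-everything-else terms assemble with the Saneblidze–Umble signs $(-1)^{i(j+k+1)}$ into precisely $\Delta_k\partial(P)-(-1)^{k-2}\partial\Delta_k(P)$. The combinatorics of which index pairs $(i,j)$ place $\Delta_2$ onto the $P$-slot versus onto an $e$-slot, and matching those against the two differential terms on the left, is where the real work lies; the vanishing statements themselves (steps 1–2) are immediate from degree/support considerations. I would therefore treat Proposition~\ref{compositions} as the easy reduction and budget the effort for the subsequent sign-matched verification of Relation~\ref{relation} for $k>3$.
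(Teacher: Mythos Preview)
Your handling of $\Delta_j\Delta_k(P)=0$ for $j,k\geq 3$ is correct and is exactly the paper's argument: the paper simply says ``the fact that $\Delta_j$ and $\Delta_k$ vanish on edges and vertices when $j,k\geq 3$ implies'' the proposition, and that is the whole proof. Your step (1)--(2) reproduces this.

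Where you spin your wheels is on the clause $\partial\Delta_k(P)=0$, and your instinct that something is wrong is accurate: as literally written the claim is false, and the paper itself confirms this later, since Lemma~\ref{Lemma for part 1} and Proposition~\ref{Part 1} are entirely devoted to analyzing the (many nonzero) terms of $\partial\Delta_k(P)$, and the reduced relation~(\ref{general relation}) contains $\partial\Delta_k(P)$ as a nontrivial summand. The proposition is a typo (note also the stray ``$i,j\geq 3$'' in the hypothesis); the intended second clause is $\Delta_k\partial(P)=0$ for $k\geq 3$, i.e.\ the \emph{first} term on the left of Relation~(\ref{relation}) vanishes when evaluated at $P$. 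This follows from the very same observation you already made: $\partial(P)=e_1+\cdots+e_{n-1}-e_n$ is a sum of edges, and $\Delta_k$ annihilates edges for $k\geq 3$. With that correction, the proposition yields precisely the reduction to~(\ref{general relation}) that the paper states immediately afterward.

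So rather than reinterpreting ``$\partial\Delta_k(P)=0$'' as shorthand for some sign-bookkeeping equality, you should simply read it as $\Delta_k\partial(P)=0$ and give the one-line edge-vanishing argument. Your steps (3)--(4), and the speculation about Saneblidze--Umble signs, belong to the subsequent propositions, not here.
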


In view of Proposition \ref{compositions}, all non-vanishing terms in Relation
\ref{relation} apply some $\Delta_{k}$ to the $2$-cell $P$. Therefore (up to
sign) Relation \ref{relation} reduces to
\begin{equation}
\left(  \Delta_{k-1}\Delta_{2}+\Delta_{2}\Delta_{k-1}+\partial\Delta
_{k}\right)  (P)=0. \label{general relation}%
\end{equation}
The signs in Relation \ref{general relation} follow from the \emph{Sign
Commutation Rule}: \emph{If an object of degree }$p$\emph{\ passes an object
of degree }$q$\emph{, affix the sign }$(-1)^{pq}$. First, consider a term
$e_{j_{1}}\otimes e_{j_{2}}\otimes\cdots\otimes e_{j_{k}}$ of $\Delta
_{k}\left(  P\right)  .$ Since $\left\vert \partial\right\vert =-1$,
multiplying by the sign in Relation \ref{relation} and simplifying gives
\[
\left(  -1\right)  ^{k-2}\left(  \mathbf{1}^{\otimes i-1}\mathbf{\otimes
}\partial\otimes\mathbf{1}^{\otimes k-i}\right)  \left(  e_{j_{1}}%
\otimes\cdots\otimes e_{j_{k}}\right)  =\left(  -1\right)  ^{i+k+1}e_{j_{1}%
}\otimes\cdots\otimes\partial e_{j_{i}}\otimes\cdots\otimes e_{j_{k}}.
\]
Second, since $\left\vert \Delta_{2}\right\vert =0,$ the sign of $\Delta
_{2}\Delta_{k-1}$ is the sign $\left(  -1\right)  ^{i+k},$ which is opposite
the sign in Relation \ref{relation}, where $\Delta_{2}$ is applied in the
$i^{th}$ position. Third, the signs of the terms $\left(  \Delta_{k-1}%
\otimes\mathbf{1}\right)  \Delta_{2}$ and $\left(  \mathbf{1}\otimes
\Delta_{k-1}\right)  \Delta_{2}$ given by Relation \ref{relation} simplify to
$-1$ and $\left(  -1\right)  ^{k+1},$ respectively. Since $\left\vert
\Delta_{k-1}\right\vert =k-3,$ the Sign Commutation Rule can only introduce a
sign when $\mathbf{1\otimes}\Delta_{k-1}$ is applied to a pair of edges.
However, this particular situation never occurs in our proof.

\begin{lemma}
All non-vanishing terms contain exactly one tensor factor $v_{j}$.
Furthermore, if $e_{i}\otimes v_{j}$ or $v_{j}\otimes e_{k}$ appears within
some term, then $i<j$ or $j\leq k$.
\end{lemma}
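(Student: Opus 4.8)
The plan is to prove both assertions by directly enumerating the non-vanishing summands of the three operators $\Delta_{k-1}\Delta_{2}(P)$, $\Delta_{2}\Delta_{k-1}(P)$, and $\partial\Delta_{k}(P)$ appearing in Relation \ref{general relation}, reading everything off from the explicit formulas for $\Delta_{2}$, $\Delta_{k}$, and $\partial$. Throughout we use $k>3$, so $k-1\geq3$ and hence $\Delta_{k-1}$ annihilates every generator of $C_{\ast}(P)$ except $P$; moreover $\Delta_{j}$ reproduces a vertex only for $j=2$ (via $v\mapsto v\otimes v$) and $\partial$ kills vertices. Consequently each of the three operators acts nontrivially only by decomposing the $2$-cell $P$ through $\Delta_{k}(P)$ or $\Delta_{2}(P)$, whose summands are monomials in the edges $e_{1},\dots,e_{n-1}$, together with the two collar terms $v_{1}\otimes P$ and $P\otimes v_{n}$ occurring in $\Delta_{2}(P)$. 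In particular every edge index that occurs is strictly less than $n$, so the exceptional formulas $\partial e_{n}=v_{n}-v_{1}$ and $\Delta_{2}(e_{n})=v_{1}\otimes e_{n}+e_{n}\otimes v_{n}$ never enter.

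For the first assertion I would treat the three operators in turn. A summand of $\partial\Delta_{k}(P)$ comes from a monomial $e_{i_{1}}\otimes\cdots\otimes e_{i_{k}}$ with $0<i_{1}<\cdots<i_{k}<n$ by replacing one factor $e_{i_{s}}$ with $\partial e_{i_{s}}=v_{i_{s}+1}-v_{i_{s}}$, which introduces exactly one vertex factor. A summand of $\Delta_{2}\Delta_{k-1}(P)$ comes from a monomial $e_{i_{1}}\otimes\cdots\otimes e_{i_{k-1}}$ with $0<i_{1}<\cdots<i_{k-1}<n$ by replacing one factor $e_{i_{s}}$ with $\Delta_{2}(e_{i_{s}})=v_{i_{s}}\otimes e_{i_{s}}+e_{i_{s}}\otimes v_{i_{s}+1}$, again inserting exactly one vertex. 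Finally, in $\Delta_{k-1}\Delta_{2}(P)$ the only inputs on which $\Delta_{k-1}$ does not vanish are the copies of $P$ in $v_{1}\otimes P$ and in $P\otimes v_{n}$, producing the monomials $v_{1}\otimes e_{j_{1}}\otimes\cdots\otimes e_{j_{k-1}}$ and $e_{j_{1}}\otimes\cdots\otimes e_{j_{k-1}}\otimes v_{n}$ with $0<j_{1}<\cdots<j_{k-1}<n$, each again with exactly one vertex factor; the summands $e_{i_{1}}\otimes e_{i_{2}}$ of $\Delta_{2}(P)$ contribute nothing since $\Delta_{k-1}$ kills edges. This exhausts the cases.

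For the second assertion I would, for each family of monomials just listed, compare the indices flanking the unique vertex factor, using the strict chains $i_{1}<i_{2}<\cdots$ built into $\Delta_{k}(P)$, $\Delta_{k-1}(P)$, and $\Delta_{2}(P)$, together with the elementary facts that for integers $a<b$ one has $a+1\leq b$ and that $0<i_{1}$ forces $i_{1}\geq1$. For instance a summand of $\partial\Delta_{k}(P)$ has the form $\cdots\otimes e_{i_{s-1}}\otimes w\otimes e_{i_{s+1}}\otimes\cdots$ with $w\in\{v_{i_{s}},v_{i_{s}+1}\}$; since $i_{s-1}<i_{s}$, whence also $i_{s-1}<i_{s}+1$, any pair $e_{i}\otimes v_{j}$ occurring there satisfies $i<j$, and since $i_{s}<i_{s+1}$, whence also $i_{s}+1\leq i_{s+1}$, any pair $v_{j}\otimes e_{k}$ occurring there satisfies $j\leq k$. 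The summands coming from $\Delta_{2}(e_{i_{s}})$, whose internal pairs are $v_{i_{s}}\otimes e_{i_{s}}$ with $i_{s}\leq i_{s}$ and $e_{i_{s}}\otimes v_{i_{s}+1}$ with $i_{s}<i_{s}+1$, and those coming from the collar monomials, whose internal pairs are $v_{1}\otimes e_{j_{1}}$ with $1\leq j_{1}$ and $e_{j_{k-1}}\otimes v_{n}$ with $j_{k-1}<n$, obey the claimed inequalities by the same bookkeeping; a vertex factor in the first or last tensor slot has no neighbor on one side and imposes no condition there.

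I do not expect a genuine obstacle: once the explicit formulas are in hand, the lemma is a finite, entirely mechanical case check driven by the monotonicity of the index tuples. The two points deserving attention are confirming that the degenerate formulas for $\partial e_{n}$ and $\Delta_{2}(e_{n})$ are never triggered, which holds because every index occurring in $\Delta_{k}(P)$ and $\Delta_{k-1}(P)$ is less than $n$, and keeping careful track of which tensor slot receives the inserted vertex so that the left- and right-neighbor inequalities are recorded in exactly the form needed when Relation \ref{general relation} is verified term by term in the remainder of the proof.
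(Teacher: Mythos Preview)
Your proposal is correct and follows essentially the same approach as the paper: both proceed by a case-by-case enumeration of the three operators $\partial\Delta_{k}(P)$, $\Delta_{k-1}\Delta_{2}(P)$, and $\Delta_{2}\Delta_{k-1}(P)$, using the explicit formulas to check that exactly one vertex factor is introduced and that the neighboring indices satisfy the stated inequalities. Your write-up is somewhat more explicit about the bookkeeping (e.g., noting that the exceptional formulas for $e_{n}$ are never triggered, and spelling out $a<b\Rightarrow a+1\leq b$), but the underlying argument is the same mechanical verification the paper gives.
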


\begin{proof}
\textbf{Case 1: } Consider $\partial\Delta_{k}(P)$. Note that $\Delta_{k}(P)$
will either vanish if $k\geq n$, or it will produce terms of the form
$e_{i_{1}}\otimes\cdots\otimes e_{i_{k}},\text{ where }0<i_{1}<\cdots<i_{k}%
<n$. Applying $\partial$ to any $e_{i}$ will create two new terms by replacing
it with $v_{i}$ or $v_{i+1}$. Since the subscript $i$ is between the
subscripts on either side, either choice gives the desired result.

\textbf{Case 2: } Consider $\Delta_{k-1}\Delta_{2}(P)$. Since $\Delta_{k-1}$
acts non-trivially only on $P$, the terms of interest produced by $\Delta_{2}$
are $v_{1}\otimes P$ and $P\otimes v_{n}$. When $\Delta_{k-1}$ acts on the
$P$, it produces a term of the form $e_{i_{1}}\otimes\cdots\otimes e_{i_{k-1}%
},\text{ where }0<i_{1}<\cdots<i_{k-1}<n$. Since $0<i<n$ for all $i$, all
terms either begin with $v_{1}$ or end with $v_{n}$ and have the desired form.

\textbf{Case 3: } Consider $\Delta_{2}\Delta_{k-1}(P)$. Notice that
$\Delta_{k-1}(P)$ produces terms of the form $e_{i_{1}}\otimes\cdots\otimes
e_{i_{k-1}},\text{ where }0<i_{1}<\cdots<i_{k-1}<n$. When $\Delta_{2}$ is
applied to an $e_{i}$ factor, it has the effect of either inserting $v_{i}$ to
the left, or $v_{i+1}$ to the right of the $e_{i}$. Since the subscript $i$
is between the subscripts on either side, either choice gives the desired result.
\end{proof}

This information allows us to classify the terms in Relation
\ref{general relation} relative to the position of $v_{i}$ with respect to
$e_{i-1}$ and $e_{i}$. We say that $v_{i}$ is \textit{left adjacent} if
$e_{i-1}$ is immediately to its left; $v_{i}$ is \textit{right adjacent} if
$e_{i}$ is immediately to its right.

\begin{definition}
A term in which $v_{i}$ is left adjacent is \textbf{left attached}; a term in which $v_{i}$ is right adjacent is \textbf{right attached}. A term that is both left and right attached is \textbf{doubly attached}; a term that is left attached or right attached but not both is \textbf{singly attached}; and a term that is neither left attached nor right attached is \textbf{unattached}.
\end{definition}

We can sub-classify within these sets as follows:

\begin{definition}
A term that begins with $v_{1}$ is \textbf{extreme minimal}. A term that ends
with $v_{n}$ is \textbf{extreme maximal}.
\end{definition}

Since doubly attached terms cannot be extreme, there are five classes of
terms, and the proof reduces to showing that each class cancels itself in
Relation \ref{general relation}. But before we can do this, we need some lemmas.

\begin{lemma} 
\label{Lemma for part 1}
\textit{Let }$2<k<n.$\textit{ If }$v_{i}$\textit{
appears in the }$i^{th}$\textit{ position of a term in }$\partial\Delta
_{k}\left(  P\right)  ,$\textit{ that term is generated one of the following
two ways, each way contributing at most one instance:}

\begin{itemize}
\item \textbf{Way 1: }\textit{If the term is unattached on the left and is not
minimally extreme, an instance of that term is generated with sign }$\left(
-1\right)  ^{i+k+1}$\textit{.}

\item \textbf{Way 2: }\textit{If the term is unattached on the right and is
not maximally extreme, an instance of that term is generated with sign
}$\left(  -1\right)  ^{i+k}$\textit{.\medskip}
\end{itemize}
\end{lemma}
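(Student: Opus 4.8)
The plan is to analyze $\partial\Delta_k(P)$ term by term and track exactly how a fixed monomial containing $v_i$ in the $i^{\text{th}}$ slot can arise. Recall from Case 1 of the previous lemma that $\Delta_k(P) = \sum_{0<i_1<\cdots<i_k<n} e_{i_1}\otimes\cdots\otimes e_{i_k}$, and applying the $j^{\text{th}}$ copy of $\partial$ replaces $e_{i_j}$ by either $v_{i_j}$ (on the left end of that slot, effectively) or $v_{i_j+1}$, each carrying sign $(-1)^{i_j+k+1}$ relative to the sign convention already fixed in the discussion preceding the lemma. So a monomial $M$ with $v_i$ in position $i$ and edges everywhere else is obtained precisely when some edge-monomial $e_{j_1}\otimes\cdots\otimes e_{j_k}$ in $\Delta_k(P)$ has its $i^{\text{th}}$ factor $e_{j_i}$ boundaried to $v_i$. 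There are two sub-cases: either $j_i = i$, so that $\partial e_i = v_{i+1}-v_i$ contributes the $-v_i$ summand, or $j_i = i-1$, so that $\partial e_{i-1} = v_i - v_{i-1}$ contributes the $+v_i$ summand.

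Next I would convert each sub-case into the stated adjacency/extremality condition. In the sub-case $j_i = i$: the factor in position $i-1$ of the edge-monomial is $e_{j_{i-1}}$ with $j_{i-1} < j_i = i$, so after boundarying, position $i-1$ still holds that edge $e_{j_{i-1}}$ with index $< i$; in particular it is \emph{not} $e_{i-1}$, so $v_i$ is unattached on the left. For this edge $e_{j_{i-1}}$ to exist we need at least one index strictly below $i$ available, i.e. $i \geq 2$; but more sharply, $M$ cannot begin with $v_1$ since $v_i$ sits in position $i\geq 2$ with a genuine edge to its left — so $M$ is not minimally extreme. Conversely, given any left-unattached, non-minimally-extreme $M$ with $v_i$ in slot $i$, reading off the edges of $M$ gives exactly one edge-monomial in $\Delta_k(P)$ (with $e_i$ reinstated in slot $i$) whose $i^{\text{th}}$ boundary yields $M$, and it does so with a single instance and sign $(-1)^{i+k+1}$; this is Way 1. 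The sub-case $j_i = i-1$ is the mirror image: position $i$ of the edge-monomial is $e_{i-1}$, forcing $j_{i+1} > i-1$, i.e. $j_{i+1} \geq i+1 > i$, so the edge in position $i+1$ is not $e_i$ and $v_i$ is unattached on the right; and $M$ cannot be maximally extreme because $v_i$ occupies slot $i$ with $k-i \geq 1$ edges to its right. Again the reconstruction is unique and the sign is $(-1)^{(i-1)+k+1} = (-1)^{i+k}$, which is Way 2.

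I would then note the two ways are mutually exclusive and exhaustive as sources of a $v_i$-in-slot-$i$ term: a term with $v_i$ that is left-attached (its left neighbor is $e_{i-1}$) cannot arise from the $j_i=i$ branch because that branch leaves an edge of index $<i$ there, nor from the $j_i=i-1$ branch because that puts $e_{i-1}$ in slot $i$, not slot $i-1$; a symmetric remark handles right-attached terms; and minimally/maximally extreme terms are excluded as shown. The "at most one instance" claim follows because within a single branch the pre-image edge-monomial and the choice of which $\partial$-summand to take are both uniquely determined by $M$.

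The main obstacle is bookkeeping the sign and the off-by-one index shift consistently: the displayed formula $(-1)^{k-2}(\mathbf{1}^{\otimes i-1}\otimes\partial\otimes\mathbf{1}^{\otimes k-i})(e_{j_1}\otimes\cdots\otimes e_{j_k}) = (-1)^{i+k+1}\,e_{j_1}\otimes\cdots\otimes\partial e_{j_i}\otimes\cdots\otimes e_{j_k}$ fixes the sign for applying $\partial$ in the $i^{\text{th}}$ tensor slot, so in the $j_i=i$ branch the relevant slot index equals $i$ (giving $(-1)^{i+k+1}$ for the $-v_i$ summand, but the summand itself carries a $-$, so the net Way-1 sign reported is $(-1)^{i+k+1}$), whereas in the $j_i=i-1$ branch the relevant tensor slot is again slot $i$ — the position of $e_{i-1}$ in the monomial — and $\partial e_{i-1}$ contributes $+v_i$, giving net sign $(-1)^{i+k+1}\cdot(+1)$; so I must double-check whether Way 2's stated sign $(-1)^{i+k}$ reflects an additional reindexing convention (e.g. indexing the "position of $v_i$" versus "position where $\partial$ acts"). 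I would resolve this by writing out $k=4$ explicitly and matching signs against Figure 2's conventions, then promoting the verified pattern to general $k$.
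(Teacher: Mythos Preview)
Your overall architecture matches the paper's: identify which edge $e_x$ in position $i$ was boundaried to produce $v_m$ (in your notation $m=i$), note that $x\in\{m-1,m\}$, and analyze each possibility. But you have swapped the two branches, and the adjacency arguments contain a strict-inequality slip that causes the swap.

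Concretely, in your sub-case $j_i=i$ you write ``$j_{i-1}<j_i=i$, so \ldots in particular it is not $e_{i-1}$.'' That inference is false: $j_{i-1}<i$ allows $j_{i-1}=i-1$, so the left neighbor may well be $e_{i-1}$ and the term may be left-attached. What \emph{is} forced in this branch is $j_{i+1}>j_i=i$, hence $j_{i+1}\ge i+1$, so the right neighbor cannot be $e_i$: this branch produces exactly the \emph{right}-unattached terms. Symmetrically, in your sub-case $j_i=i-1$ you write ``$j_{i+1}>i-1$, i.e.\ $j_{i+1}\ge i+1$,'' which is also false ($j_{i+1}$ could equal $i$); what is forced there is $j_{i-1}<i-1$, making the term \emph{left}-unattached. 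Once you reverse the assignment, the extremality conditions fall out correctly: the $x=m-1$ branch needs $m-1\ge 1$, excluding $v_1$ in position $1$ (not minimally extreme), and the $x=m$ branch needs $m\le n-1$, excluding $v_n$ in position $k$ (not maximally extreme). The sign puzzle you flag in your last paragraph is a symptom of this swap rather than a reindexing subtlety: applying $\partial$ in slot $i$ contributes $(-1)^{i+k+1}$, and $\partial e_{m-1}$ yields $+v_m$ (net $(-1)^{i+k+1}$, Way~1) while $\partial e_m$ yields $-v_m$ (net $(-1)^{i+k}$, Way~2), matching the lemma as stated.
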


\begin{proof}
\textbf{Case 1:} Consider a term that contains $e_{i_{1}}\otimes v_{i_{2}%
}\otimes e_{i_{3}},\text{ where }0<i_{1}<i_{2}\leq i_{3}<n$. Since $\Delta
_{k}(P)$ cannot generate $v_{i_{2}}$, it was produced by $\partial$. Since
$v_{i_{2}}$ appears between $e_{i_{1}}$ and $e_{i_{3}}$, we know that
$\partial$ was applied to a term of the form $e_{i_{1}}\otimes e_{x}\otimes
e_{i_{3}}$ with $0<i_{1}<x<i_{3}<n$. Furthermore, there are only two possible
edges $e_{x}$ that $\partial$ could have acted upon to generate $v_{i_{2}}$,
namely when $x=i_{2}-1$ or $x=i_{2}$. Therefore, since there are only two
possible values for $x$, there are at most two ways to generate any given term.

Suppose that $x=i_{2}-1$. Since $i_{1}<x$, we have $i_{1}<i_{2}-1$ forcing the
original term to be unattached on the left. To see that all left unattached
terms can be generated, simply note that the subscript of any left unattached
term will satisfy the inequality $i_{1}<i_{2}-1$ by definition. Then
$\Delta_{k}(P)$ produces the term $e_{i_{1}}\otimes e_{i_{2}-1}\otimes
e_{i_{3}},$ where $0<i_{1}<i_{2}\leq i_{3}<n$, and produces the desired term
after $\partial$ is applied to $e_{i_{2}-1}$. Now applying $\partial$ this way
produces a positive $v_{i_{2}}$ in the same position in which $\partial$ was
applied, and by the discussion above regarding the Sign
Commutation Rule, the term's sign is
$(-1)^{i+k+1}$. Similarly, suppose
$x=i_{2}$. Then all right unattached terms, and only  right unattached terms, can be
generated. Similar analysis reveals that the term's sign is $(-1)^{i+k}$.

\textbf{Case 2: } Consider a term of the form $v_{i_{2}}\otimes e_{i_{3}%
}\otimes\cdots,\text{ where }0<i_{2}\leq i_{3}<n$. By an argument similar to
the one above, $\partial$ must have been applied to a term of the form
$e_{x}\otimes e_{i_{3}}\otimes\cdots$, where $x=i_{2}-1$ or $x=i_{2}$. The
proof is similar to the case above and left to the reader; the only significant
difference is that when $x=i_{2}-1$ we cannot immediately conclude that all
possible left unattached terms will be generated. This is because $x>0$, and
since $x=i_{2}-1$, we have $i_{2}>1$. Hence only non-minimally extreme left
unattached terms can be formed, and an analysis similar to that in Case 1
shows that the terms produced have the sign $(-1)^{i+k+1}$.

\textbf{Case 3: } Consider a term of the form $\cdots\otimes e_{i_{1}}\otimes
v_{i_{2}},\text{ where }0<i_{1}<i_{2}<n$. This case is can be proved using an
argument very similar to case 2, the details of which are left to the reader.
\end{proof}

\begin{lemma} \label{Lemma for part 3}
\textit{Let }$3<k\leq n.$\textit{ If }$v_{i}%
$\textit{ appears in the }$i^{th}$\textit{ position of a term in }$\Delta
_{2}\Delta_{k-1}\left(  P\right)  ,$\textit{ that term is generated one of the
following two ways, each way contributing at most one instance:}

\begin{itemize}
\item \textbf{Way 1: }\textit{If the term is attached on the left, an instance
of that term is generated with sign }$\left(  -1\right)  ^{i+k+1}$\textit{.}

\item \textbf{Way 2: }\textit{If the term is attached on the right, an
instance of that term is generated with sign }$\left(  -1\right)  ^{i+k}%
$\textit{.\bigskip}
\end{itemize}
\end{lemma}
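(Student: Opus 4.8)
The plan is to run the bookkeeping of Lemma~\ref{Lemma for part 1}, with the single application of $\partial$ to a tensor factor of $\Delta_{k}(P)$ replaced by the single application of $\Delta_{2}$ to a tensor factor of $\Delta_{k-1}(P)$. The observation that makes this work, and the reason for assuming $3<k$, is that then $k-1\geq 3$, so $\Delta_{k-1}(P)=\sum_{0<i_{1}<\cdots<i_{k-1}<n}e_{i_{1}}\otimes\cdots\otimes e_{i_{k-1}}$ is a sum of monomials all of whose tensor factors are positive edges with strictly increasing subscripts in $\{1,\dots,n-1\}$; in particular no factor is $P$, $v_{1}$, or $v_{n}$. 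Hence every nonzero term of $\Delta_{2}\Delta_{k-1}(P)$ comes from a unique such monomial by applying $\Delta_{2}$ to exactly one edge factor $e_{m}$, and since $\Delta_{2}(e_{m})=v_{m}\otimes e_{m}+e_{m}\otimes v_{m+1}$, with both coefficients $+1$ and both summands defined because $m<n$, that factor is replaced either by $v_{m}\otimes e_{m}$ (the \emph{left branch}) or by $e_{m}\otimes v_{m+1}$ (the \emph{right branch}), all other factors untouched. In either case the result has exactly one vertex factor, and its edge subscripts are again $i_{1}<\cdots<i_{k-1}$.

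I would next record the adjacency dichotomy: in the left branch the new vertex $v_{m}$ is immediately followed by $e_{m}$, so the term is right attached; in the right branch the new vertex $v_{m+1}$ is immediately preceded by $e_{m}=e_{(m+1)-1}$, so the term is left attached. Thus every term of $\Delta_{2}\Delta_{k-1}(P)$ is attached on at least one side, never unattached. For the ``at most one instance each way'' claim: if a term $T$ with (unique) vertex $v_{i}$ is left attached, it can only have arisen from the right branch applied to $e_{i-1}$; the parent monomial is recovered by deleting $v_{i}$ and leaving $e_{i-1}$ in its place, which by the first paragraph is a genuine term of $\Delta_{k-1}(P)$, and the position of $e_{i-1}$ in it is forced by the position of $v_{i}$ in $T$, so exactly one such instance occurs. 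Symmetrically, a right attached $T$ arises exactly once, from the left branch applied to $e_{i}$. A doubly attached term $\cdots\otimes e_{i-1}\otimes v_{i}\otimes e_{i}\otimes\cdots$ is produced once each way, from the common parent $\cdots\otimes e_{i-1}\otimes e_{i}\otimes\cdots$; its two instances carry opposite signs (by the computation in the next paragraph), which is the self-cancellation of the doubly attached class within $\Delta_{2}\Delta_{k-1}(P)$.

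For the signs I would use the convention fixed in the paragraph preceding Lemma~\ref{Lemma for part 1}: applying $\Delta_{2}$ to the factor of $\Delta_{k-1}(P)$ in the $q$-th position contributes to Relation~\ref{general relation} with sign $(-1)^{q+k}$, and no commutation sign is incurred since $|\Delta_{2}|=0$. For a term whose vertex sits in the $i$-th position, the right branch deposits the vertex to the right of the split edge, so that edge occupies position $i-1$ of the parent and the sign is $(-1)^{(i-1)+k}=(-1)^{i+k+1}$, which is Way~1; the left branch deposits the vertex to the left of the split edge, so the edge occupies position $i$ of the parent and the sign is $(-1)^{i+k}$, which is Way~2. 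I would appeal throughout to the earlier lemma on the placement of vertex factors to be sure the monomials written down actually occur, and note that, unlike Lemma~\ref{Lemma for part 1}, no ``not extreme'' caveats are needed here: being left attached already forces the vertex subscript to be at least $2$ (so $e_{i-1}$ is an edge of $\Delta_{k-1}(P)$) and being right attached forces it to be at most $n-1$ (so $e_{i}$ is), which are precisely the constraints behind those caveats.

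I expect the only genuine obstacle to be the sign accounting: matching the branch of $\Delta_{2}$ and the tensor position at which it acts against the position-dependent sign in Relation~\ref{general relation}, and confirming that the one-slot offset between the two branches yields exactly the $(-1)$ discrepancy between Way~1 and Way~2. The combinatorial core — pinning down the unique parent monomial and checking it lies in $\Delta_{k-1}(P)$ — is then routine.
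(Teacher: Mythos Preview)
Your proposal is correct and follows essentially the same approach as the paper: both identify that the unique vertex in a term of $\Delta_{2}\Delta_{k-1}(P)$ must come from applying $\Delta_{2}$ to an adjacent edge factor of a monomial in $\Delta_{k-1}(P)$, then read off attachment and sign from which of the two summands of $\Delta_{2}(e_{m})$ is taken. The paper organizes this into three positional cases (vertex in the interior, vertex in the first slot, vertex in the last slot), whereas you organize it by the two branches of $\Delta_{2}$; the content and sign computation are the same.
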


\begin{proof}
\textbf{Case 1: } Consider a term that contains $e_{i_{1}}\otimes v_{i_{2}%
}\otimes e_{i_{3}},\text{ where }0<i_{1}<i_{2}\leq i_{3}<n$. Since
$\Delta_{k-1}(P)$ cannot produce $v_{i_{2}}$, it is produced by $\Delta_{2}$,
and because $\Delta_{2}$ inserts a $v$ next to the $e$ to which it was
applied, $\Delta_{2}$ was applied to either $e_{i_{1}}$ or $e_{i_{3}}$.
Additionally, because of how $\Delta_{2}$ is defined, the only way $\Delta
_{2}(e_{i_{1}})$ can generate $v_{i_{2}}$ is if $i_{1}=i_{2}-1$, and the only
way $\Delta_{2}(e_{i_{3}})$ can generate $v_{i_{2}}$ is if $i_{2}=i_{3} $.
Therefore, there are at most two ways to create the desired term.

Now suppose $\Delta_{2}$ was applied to $e_{i_{1}}$ and that $i_{1}=i_{2}-1$.
Then by definition, the resulting term will be left attached. Furthermore, any
given left attached term of the form $e_{i_{2}-1}\otimes v_{i_{2}}\otimes
e_{i_{3}},$ where $0<i_{2}-1\leq i_{3}<n,$ can be generated by simply applying
$\Delta_{2}$ to the term $e_{i_{2}-1}\otimes e_{i_{3}}$ generated by
$\Delta_{k}(P)$. Now in order to create a term with $v_{i_{2}}$ in the
$i^{th}$ position, we must apply $\Delta_{2}$ to the $(i-1)^{st}$ position,
and so by the discussion above on the Sign Commutation Rule, the term's
sign is $(-1)^{i+k+1}$. If we suppose that $\Delta_{2}$ was
applied to $e_{i_{3}}$ and that $i_{2}=i_{3}$, then a very similar analysis
shows that all right attached terms of the form $e_{i_{1}}\otimes v_{i_{2}%
}\otimes e_{i_{2}},$ where $0<i_{2}<n$ are generated with sign
of $(-1)^{i+k}$.

\textbf{Case 2: } Consider a term of the form $v_{i_{2}}\otimes e_{i_{3}%
}\otimes\cdots,\text{ where }0<i_{2}<i_{3}<n$. By an argument similar to the
one above, this term can only be generated if $\Delta_{2}$ is applied to
$e_{i_{3}}$ and if $i_{2}=i_{3}$. The same reasoning as in case one shows that
all right attached terms, and only those terms, will be generated with sign
$(-1)^{i+k}$.

\textbf{Case 3: } Consider a term of the form $\cdots\otimes e_{i_{1}}\otimes
v_{i_{2}},\text{ where }0<i_{1}<i_{2}<n$. This case is nearly identical to
case 2, and we leave it to the reader to show that all left attached terms,
and only those terms, can be generated with sign $(-1)^{i+k+1}$.
\end{proof}

\begin{proposition}
\label{Part 1}Let $2<k<n.$ If $v_{i}$ appears in the $i^{th}$ position of a
term in $\partial\Delta_{k}(P),$ after cancellations the terms that remain are:

\begin{itemize}
\item All maximally extreme unattached terms with positive sign.

\item All minimally extreme unattached terms with sign $(-1)^{k}$.

\item All left attached terms with sign $(-1)^{i+k}$.

\item All right attached terms with sign $(-1)^{i+k+1}$.
\end{itemize}
\end{proposition}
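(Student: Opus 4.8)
The plan is to extract the coefficient of an arbitrary term of $\partial\Delta_k(P)$ directly from Lemma~\ref{Lemma for part 1} and then split into the possible attachment types. Recall that every nonvanishing term of $\partial\Delta_k(P)$ has exactly one vertex tensor factor; fix such a term $T$, let $v$ be its vertex, and say $v$ occupies position $i$ (indexed so that position $0$ is the leftmost factor, matching the sum $\sum_{i=0}^{k-1}$ in the definition of $\partial\Delta_k(P)$). Lemma~\ref{Lemma for part 1} says precisely that the coefficient of $T$ in $\partial\Delta_k(P)$ is
\[
c(T)=\varepsilon_1(-1)^{i+k+1}+\varepsilon_2(-1)^{i+k},
\]
where $\varepsilon_1=1$ exactly when $T$ is unattached on the left and not minimally extreme (and $\varepsilon_1=0$ otherwise), and $\varepsilon_2=1$ exactly when $T$ is unattached on the right and not maximally extreme (and $\varepsilon_2=0$ otherwise). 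Everything then reduces to determining $\varepsilon_1$ and $\varepsilon_2$ for each class of term.

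First I would dispose of the classes that contribute nothing. If $T$ is doubly attached it is attached on both sides, so $\varepsilon_1=\varepsilon_2=0$ and $c(T)=0$; no doubly attached term survives. If $T$ is unattached but neither minimally nor maximally extreme, then $\varepsilon_1=\varepsilon_2=1$ and $c(T)=(-1)^{i+k+1}+(-1)^{i+k}=0$. This is the only genuine cancellation, and it is forced by the opposite signs carried by Way~1 and Way~2 of Lemma~\ref{Lemma for part 1}.

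Next I would treat the surviving classes. Suppose $T$ is left attached but not right attached, so $\varepsilon_1=0$. Such a $T$ is never extreme: it is not minimally extreme, for then its vertex would be $v_1$, which being left attached would need an edge $e_0$ to its left; and it is not maximally extreme, because $v_n$ can arise in $\partial\Delta_k(P)$ only as $\partial e_{n-1}$ applied to the last tensor factor, after which $v_n$'s left neighbor is an edge of subscript $<n-1$, contradicting left attachment. Hence $\varepsilon_2=1$, so $c(T)=(-1)^{i+k}$; and since Way~2 actually produces every such term, all left attached terms remain with sign $(-1)^{i+k}$. The right attached case is the mirror image, yielding $c(T)=(-1)^{i+k+1}$. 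Finally, if $T$ is unattached and minimally extreme, its vertex is $v_1$ in position $0$, so $i=0$, and $T$ cannot also be maximally extreme; thus $\varepsilon_1=0,\ \varepsilon_2=1$, and $c(T)=(-1)^{0+k}=(-1)^k$. If $T$ is unattached and maximally extreme, its vertex is $v_n$ in the last position of the $k$-fold tensor, so $i=k-1$, and $T$ cannot be minimally extreme; thus $\varepsilon_2=0,\ \varepsilon_1=1$, and $c(T)=(-1)^{(k-1)+k+1}=(-1)^{2k}=+1$. These four outcomes are exactly the four assertions in the statement.

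The only real friction I anticipate is the bookkeeping in the last step: verifying that singly attached terms cannot be extreme and that the vertex of an extreme term is forced into position $0$ or position $k-1$, so that the sign $(-1)^{i+k}$ collapses to the constants claimed. Once the indexing convention for ``position'' is fixed, these are short structural observations about how $\partial$ acts on the strictly increasing edge tuples appearing in $\Delta_k(P)$, and the rest is simply reading $\varepsilon_1$ and $\varepsilon_2$ off Lemma~\ref{Lemma for part 1}.
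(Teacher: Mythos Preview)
Your proposal is correct and follows essentially the same approach as the paper, which merely states that the result follows from ``several straightforward applications of Lemma~\ref{Lemma for part 1}'' and leaves the details to the reader. You carry out exactly those applications: reading off the coefficient $c(T)=\varepsilon_1(-1)^{i+k+1}+\varepsilon_2(-1)^{i+k}$ from the two Ways of the lemma and then evaluating $(\varepsilon_1,\varepsilon_2)$ on each attachment class, including the verifications the paper only names (doubly attached terms never arise, non-extreme unattached terms cancel, extreme terms in $\partial\Delta_k(P)$ are necessarily unattached). Your explicit choice of $0$-indexed positions is what makes the extreme-case signs collapse to $(-1)^k$ and $+1$ as stated; the paper's own indexing is somewhat ambiguous, but your convention is the one consistent with the proposition's conclusions.
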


\begin{proof}
The proof requires several straightforward applications of Lemma
\ref{Lemma for part 1}, some of which show that no doubly attached terms are
generated, all unattached terms generated cancel, and all extreme terms
generated are unattached. The details are left to the reader.
\end{proof}

\begin{proposition}
\label{Part 2}Let $3<k\leq n$. Then $\Delta_{k-1}\Delta_{2}(P)$ contains:

\begin{itemize}
\item All minimally extreme terms with sign $(-1)^{k+1}$.

\item All maximally extreme terms with negative sign.
\end{itemize}
\end{proposition}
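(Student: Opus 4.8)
The plan is to compute $\Delta_{k-1}\Delta_2(P)$ directly from the definitions, which is quite tractable here because $\Delta_{k-1}$ acts non-trivially only on the $2$-cell $P$. Recall from Case 2 of the first Lemma that the only tensor monomials of $\Delta_2(P)$ on which $\Delta_{k-1}$ survives are $v_1\otimes P$ and $P\otimes v_n$; all the $e_{i_1}\otimes e_{i_2}$ terms are killed since $\Delta_{k-1}$ vanishes on edges and $k-1\geq 3$. So $\Delta_{k-1}\Delta_2(P)$ reduces to $(\mathbf 1\otimes\Delta_{k-1})(v_1\otimes P)+(\Delta_{k-1}\otimes\mathbf 1)(P\otimes v_n)$, up to the signs recorded in the paragraph following Relation \ref{general relation}. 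Expanding $\Delta_{k-1}(P)=\sum_{0<i_1<\cdots<i_{k-1}<n}e_{i_1}\otimes\cdots\otimes e_{i_{k-1}}$ then gives, on the nose, $\sum v_1\otimes e_{i_1}\otimes\cdots\otimes e_{i_{k-1}}$ (a minimally extreme term) plus $\sum e_{i_1}\otimes\cdots\otimes e_{i_{k-1}}\otimes v_n$ (a maximally extreme term), each such tuple occurring exactly once. Conversely, every minimally extreme term $v_1\otimes e_{j_1}\otimes\cdots\otimes e_{j_{k-1}}$ (with $0<j_1<\cdots<j_{k-1}<n$) arises this way from the unique choice $i_\ell=j_\ell$, and likewise for maximally extreme terms, so the content claim is exactly "all minimally extreme terms and all maximally extreme terms, each once."

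Next I would pin down the two signs. For the term $(\mathbf 1\otimes\Delta_{k-1})\Delta_2$ producing the minimally extreme family, the paragraph after Relation \ref{general relation} records that the sign of $(\mathbf 1\otimes\Delta_{k-1})\Delta_2$ coming from Relation \ref{relation} simplifies to $(-1)^{k+1}$, and it is further noted there that the Sign Commutation Rule introduces an extra sign only when $\mathbf 1\otimes\Delta_{k-1}$ is applied to a pair of edges — which does not happen here, since $\Delta_{k-1}$ is applied to $P$, not to an edge. Hence the minimally extreme terms carry sign $(-1)^{k+1}$, as asserted. For the term $(\Delta_{k-1}\otimes\mathbf 1)\Delta_2$ producing the maximally extreme family, the same paragraph records that this sign simplifies to $-1$, and no commutation sign is incurred ($\Delta_{k-1}$ lands in the leftmost slot), so the maximally extreme terms carry negative sign, again as asserted.

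The main obstacle is really just bookkeeping rather than any conceptual difficulty: one must be careful that the two families are genuinely disjoint (a minimally extreme term begins with $v_1$ and a maximally extreme term ends with $v_n$; these coincide only if $k-1=0$, excluded by $k>3$, so no term is both and there is nothing to cancel between the two families), and one must double-check that no monomial appears with multiplicity greater than one — which follows from the fact that the index tuple $(i_1,\dots,i_{k-1})$ is recovered uniquely from the output monomial. There is also the small point that $\Delta_{k-1}(P)$ is non-empty precisely when $k-1<n$, i.e. $k\leq n$, which is exactly the hypothesis $3<k\leq n$; when $k=n$ the sum $\sum_{0<i_1<\cdots<i_{n-1}<n}$ has a single term, and the statement still holds. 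With these checks in place the proposition follows immediately, so I would present the argument compactly: reduce via the Lemma to the two surviving $\Delta_2$-monomials, expand, read off the two families with their multiplicities, and quote the sign computations already established in the text.
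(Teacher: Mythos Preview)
Your proposal is correct and follows essentially the same route as the paper: reduce to the two primitive summands $v_1\otimes P$ and $P\otimes v_n$ of $\Delta_2(P)$ using that $\Delta_{k-1}$ vanishes on edges, expand $\Delta_{k-1}(P)$ to obtain exactly the minimally and maximally extreme families, and read the signs $(-1)^{k+1}$ and $-1$ from the discussion following Relation~\ref{general relation}. Your additional bookkeeping (disjointness of the two families, uniqueness of the index tuple, the boundary case $k=n$) is more than the paper spells out but entirely in the same spirit.
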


\begin{proof}
Since $\Delta_{k-1}$ only acts non-trivially on $P$, the only terms produced
by $\Delta_{2}$ that do not immediately vanish are $v_{1} \otimes P + P
\otimes v_{n}$. Additionally, since $k>3$, $\Delta_{k-1}$ produces
no primitive terms when it is applied by the definition of $\Delta_{k}$ for
$k>2$.

When $\Delta_{k-1}$ is applied to the first term, it generates terms of the
form $v_{1} \otimes e_{i_{1}} \otimes e_{i_{2}} \otimes\cdots$, which are all
minimally extreme. Since the sign commutation rule introduces nothing new, we have that the term has the final sign $(-1)^{k+1}$.

When $\Delta_{k-1}$ is applied to the second term, it generates terms of the
form $\cdots\otimes e_{i_{k-3}}\otimes e_{i_{k-2}}\otimes v_{n}$, which are
maximally extreme. Since the sign commutation rule introduces nothing new, we have that the sign will always be negative.
\end{proof}

\begin{proposition}
\label{Part 3} Let $3<k\leq n$. If $v_{i}$ appears in the $i^{th}$ position of
a term in $\Delta_{2}\Delta_{k-1}(P)$, after cancellations the terms that
remain are:

\begin{itemize}
\item All maximally extreme singly attached terms with positive sign.

\item All minimally extreme singly attached terms with sign $(-1)^{k}$.

\item All left attached terms with sign $(-1)^{i+k+1}$.

\item All right attached terms with sign $(-1)^{i+k}$.
\end{itemize}
\end{proposition}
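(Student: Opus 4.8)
The plan is to reduce the proposition to a bookkeeping argument resting on Lemma \ref{Lemma for part 3}: the non-extreme terms are handled by that lemma, and the extreme terms by direct inspection of how $\Delta_{2}$ acts on the end factors of a term of $\Delta_{k-1}(P)$. Fix a term $T$ occurring in $\Delta_{2}\Delta_{k-1}(P)$ whose unique vertex $v_{i}$ lies in the $i^{th}$ tensor slot, and first suppose $T$ is neither minimally nor maximally extreme. Lemma \ref{Lemma for part 3} says exactly that $T$ is produced by Way 1 (once, with sign $(-1)^{i+k+1}$) precisely when $T$ is left attached, by Way 2 (once, with sign $(-1)^{i+k}$) precisely when $T$ is right attached, and in no other way. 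Splitting on attachment type: if $T$ is doubly attached it is produced once by each Way, the signs $(-1)^{i+k+1}$ and $(-1)^{i+k}$ are opposite, and the two instances cancel, so no doubly attached term survives; if $T$ is singly left attached it is produced only by Way 1 and survives with sign $(-1)^{i+k+1}$; if singly right attached, only by Way 2, surviving with sign $(-1)^{i+k}$; and if $T$ is unattached, neither Way applies, so $T$ never occurs. This gives the third and fourth bullets --- read as statements about the singly attached non-extreme terms, these being the only left or right attached terms that remain --- and it accounts for the absence of doubly attached and of unattached non-extreme terms from the list.

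It remains to treat the extreme terms, which Lemma \ref{Lemma for part 3} deliberately excludes. Since $\Delta_{k-1}(P)$ is a sum of terms $e_{i_{1}}\otimes\cdots\otimes e_{i_{k-1}}$ with $0<i_{1}<\cdots<i_{k-1}<n$, and $\Delta_{2}(e_{j})=v_{j}\otimes e_{j}+e_{j}\otimes v_{j+1}$, a term of $\Delta_{2}\Delta_{k-1}(P)$ begins with $v_{1}$ exactly when $\Delta_{2}$ acts on a leading factor equal to $e_{1}$ and contributes its $v_{1}\otimes e_{1}$ summand, and ends with $v_{n}$ exactly when $\Delta_{2}$ acts on a trailing factor equal to $e_{n-1}$ and contributes its $e_{n-1}\otimes v_{n}$ summand. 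In the first case the resulting term $v_{1}\otimes e_{1}\otimes e_{i_{2}}\otimes\cdots$ is minimally extreme and, since nothing lies to the left of $v_{1}$, singly (right) attached; it arises exactly once, and tracking the combinatorial sign gives the sign $(-1)^{k}$ of the second bullet. In the second case the resulting term $\cdots\otimes e_{i_{k-2}}\otimes e_{n-1}\otimes v_{n}$ is maximally extreme and singly (left) attached; it arises exactly once, with the sign of the first bullet. In particular no unattached extreme term is ever produced, so the extreme survivors are exactly the two families claimed. Combining the non-extreme and extreme analyses gives the proposition.

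I expect the main obstacle to be the sign bookkeeping, concentrated in the doubly attached case. There one must check that the two contributions to a term $\cdots\otimes e_{i-1}\otimes v_{i}\otimes e_{i}\otimes\cdots$ --- one from the $e_{i-1}\otimes v_{i}$ summand of $\Delta_{2}(e_{i-1})$, the other from the $v_{i}\otimes e_{i}$ summand of $\Delta_{2}(e_{i})$ --- really are distinct summands of $\Delta_{2}\Delta_{k-1}(P)$, and that after applying the combinatorial sign $(-1)^{i(j+n+1)}$ of Relation \ref{relation} together with the Sign Commutation Rule they occur with opposite sign; this is precisely what kills every doubly attached term, and it is the source of the asymmetry between the left attached sign $(-1)^{i+k+1}$ and the right attached sign $(-1)^{i+k}$. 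The strict inequalities $0<i_{1}<\cdots<i_{k-1}<n$ enter twice: they guarantee that the required preimages under $\Delta_{k-1}$ exist, so that each left or right attached term is genuinely hit, and they rule out any further coincidental generation of a given term.
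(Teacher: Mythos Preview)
Your argument is essentially the paper's: apply Lemma \ref{Lemma for part 3} and read off the attachment types, observing that doubly attached terms cancel, unattached terms never arise, and extreme terms are automatically singly attached. One small correction: Lemma \ref{Lemma for part 3} does not ``deliberately exclude'' extreme terms --- its Cases 2 and 3 treat a vertex in the first or last slot, which includes the extreme cases --- so your separate direct inspection of the extreme terms is redundant rather than necessary, though it is correct and arguably clearer than citing the lemma there.
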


\begin{proof}
The proof requires several straightforward applications of Lemma
\ref{Lemma for part 3} some of which show that no unattached terms are
generated, all doubly attached terms cancel, and all extreme terms are singly
attached. The details are left to the reader.
\end{proof}

Now when $k>3$, all three propositions apply and we see that in Relation
\ref{general relation} the singly attached terms in $\partial\Delta_{k}(P)$
and $\Delta_{2}\Delta_{k-1}(P)$ cancel, the extreme unattached terms in
$\partial\Delta_{k}(P)$ and $\Delta_{k-1}\Delta_{2}(P)$ cancel, and the
extreme singly attached terms in $\Delta_{k-1}\Delta_{2}(P)$ and $\Delta
_{2}\Delta_{k-1}(P)$ cancel. Therefore, all terms cancel and the relation is satisfied.

Having proved the theorem for $k>3,$ we consider the special case $k=3.$ We
must show that:
\[
\lbrack(\partial\otimes\mathbf{1}\otimes\mathbf{1}+\mathbf{1}\otimes
\partial\otimes\mathbf{1}+\mathbf{1}\otimes\mathbf{1}\otimes\partial
)\Delta_{3}+(-\Delta_{2}\otimes\mathbf{1}+\mathbf{1}\otimes\Delta_{2}%
)\Delta_{2}](P)=0
\]

\begin{proposition}
The terms generated by $(\partial\Delta_{3}+\Delta_{2}\Delta_{2})(P)$ form
three classes, which independently satisfy the conclusions of Propositions
\ref{Part 1}, \ref{Part 2}, and \ref{Part 3}.
\end{proposition}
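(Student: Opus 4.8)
The plan is to carry over the three–part analysis already used for $k>3$, the one new feature being that $\Delta_{k-1}=\Delta_{2}$ no longer vanishes on edges and vertices, so its extra outputs must be tracked by hand. First I would expand $(\partial\Delta_{3}+\Delta_{2}\Delta_{2})(P)$ completely, using $\Delta_{2}(P)=v_{1}\otimes P+P\otimes v_{n}+\sum_{0<a<b<n}e_{a}\otimes e_{b}$ and then applying the outer $\Delta_{2}$ to each tensor factor of each summand, with the signs $-\Delta_{2}\otimes\mathbf{1}+\mathbf{1}\otimes\Delta_{2}$ that Relation \ref{relation} prescribes for $k=3$. Sorting the resulting monomials by whether they still contain the $2$-cell $P$, I expect the $P$-bearing monomials --- namely $v_{1}\otimes v_{1}\otimes P$, $v_{1}\otimes P\otimes v_{n}$ and $P\otimes v_{n}\otimes v_{n}$, each produced twice with opposite signs (for instance $v_{1}\otimes v_{1}\otimes P$ arises from $\Delta_{2}(v_{1})\otimes P$ and from $v_{1}\otimes\Delta_{2}(P)$) --- to cancel in pairs, so that every remaining monomial lies in $C_{\ast}(P)^{\otimes 3}$ with no $P$-factor.

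Next I would partition the surviving monomials into the three promised classes according to how they were produced. \textbf{Class A} is $\partial\Delta_{3}(P)$ itself. \textbf{Class B} consists of the monomials obtained by applying the outer $\Delta_{2}$ to the $P$-factor of $v_{1}\otimes P$ or of $P\otimes v_{n}$ and keeping the edge output, i.e.\ the terms $v_{1}\otimes e_{a}\otimes e_{b}$ coming from $v_{1}\otimes\Delta_{2}(P)$ and $e_{a}\otimes e_{b}\otimes v_{n}$ coming from $\Delta_{2}(P)\otimes v_{n}$. \textbf{Class C} consists of the monomials obtained by applying the outer $\Delta_{2}$ to one of the two edge factors of a summand $e_{a}\otimes e_{b}$ of $\Delta_{2}(P)$. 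Every surviving monomial falls into exactly one of A, B, C (the $P$-bearing ones having already cancelled), which is the structural content of the proposition; it then remains to verify that A, B and C satisfy, respectively, the conclusions of Propositions \ref{Part 1}, \ref{Part 2} and \ref{Part 3} with $k=3$ inserted.

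Class A is handled verbatim, since Lemma \ref{Lemma for part 1} and Proposition \ref{Part 1} require only $2<k<n$ and hence apply as stated. For Class B one checks that, after the pairings above, its monomials are exactly the minimally extreme terms $v_{1}\otimes e_{a}\otimes e_{b}$ and the maximally extreme terms $e_{a}\otimes e_{b}\otimes v_{n}$, with the signs demanded by Proposition \ref{Part 2} for $k=3$. For Class C one observes that it is precisely what $\Delta_{2}$ acting on the two edge factors of a two–fold edge tensor produces --- the mechanism governed by Lemma \ref{Lemma for part 3} --- and, because $|\Delta_{2}|=0$, the sign–commutation rule never intervenes, so the conclusion of Proposition \ref{Part 3} for $k=3$ follows. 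With the three classes in place, the $k=3$ case then closes exactly as the case $k>3$ did: the singly attached terms of Class A cancel those of Class C, the extreme unattached terms of Class A cancel the extreme terms of Class B, and the extreme singly attached terms of Class B cancel those of Class C, so that Relation \ref{general relation} holds for $k=3$ and Theorem \ref{Main Result} is fully proved. The degenerate cases $n=3$ (where $\Delta_{3}=0$ and the relation reduces to strict coassociativity of $\Delta_{2}$ on $P$, a one-line check) and $n=4$ I would verify directly.

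The step I expect to be the main obstacle is the sign bookkeeping in the Class B / Class C split. For $k>3$ this split is forced, because $\Delta_{k-1}$ annihilates every edge and vertex, so $\Delta_{k-1}\Delta_{2}(P)$ can only feed off the $v_{1}\otimes P$ and $P\otimes v_{n}$ summands while $\Delta_{2}\Delta_{k-1}(P)$ feeds off a pure edge tensor; for $k=3$ both of these reduce literally to ``$\Delta_{2}$ applied after $\Delta_{2}$'', and one must argue carefully that the outputs of the outer $\Delta_{2}$ on $v_{1}$, on $v_{n}$, and on the $P$ inside $v_{1}\otimes P$ and $P\otimes v_{n}$ either cancel as $P$-bearing monomials or reproduce exactly the Class B tally with the correct signs, while the outputs on the $e_{a}\otimes e_{b}$ summands reproduce exactly the Class C tally --- and that all signs land on the values required by Propositions \ref{Part 2} and \ref{Part 3} with $k=3$. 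This is exactly the place where the proofs of Propositions \ref{Part 2} and \ref{Part 3} (stated only for $3<k\le n$) break down and must be replaced by the present reorganization.
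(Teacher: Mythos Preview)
Your proposal is correct and follows essentially the same route as the paper: you split $\Delta_{2}\Delta_{2}(P)$ according to whether the outer $\Delta_{2}$ hits the primitive summands $v_{1}\otimes P+P\otimes v_{n}$ (your Class~B) or the edge summands $\sum e_{a}\otimes e_{b}$ (your Class~C), observe that the $P$-bearing monomials cancel internally so that Class~B reduces to exactly the extreme terms $\sum v_{1}\otimes e_{a}\otimes e_{b}-\sum e_{a}\otimes e_{b}\otimes v_{n}$ matching Proposition~\ref{Part 2}, and note that Lemma~\ref{Lemma for part 3} applies to Class~C because only the non-primitive part of $\Delta_{2}(P)$ is in play. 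The paper does the same, stating the outcome of the Class~B expansion directly rather than tracking the three pairwise cancellations you spell out; your version is a bit more explicit but the logic is identical.
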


\begin{proof}
Note that the proof of Proposition \ref{Part 1} still applies to
$\partial\Delta_{3}(P)$ as before since it never used the restriction that
$k\geq3$. Therefore, we must show that $(\Delta_{2}\otimes\mathbf{1}%
-\mathbf{1}\otimes\Delta_{2})\Delta_{2}$ can be decomposed into parts that
respectively satisfy the conclusions of Proposition \ref{Part 2} and
Proposition \ref{Part 3}. Now expanding we have
\begin{align*}
&  (-\Delta_{2}\otimes\mathbf{1}+\mathbf{1}\otimes\Delta_{2})\Delta
_{2}(P)=(-\Delta_{2}\otimes\mathbf{1}+\mathbf{1}\otimes\Delta_{2})\left(
v_{1}\otimes P+P\otimes v_{n}\right) \\
&  \hspace*{2in}(-\Delta_{2}\otimes\mathbf{1}+\mathbf{1}\otimes\Delta_{2}%
)\sum_{0<i_{1}<i_{2}<n}e_{i_{1}}\otimes e_{i_{2}}.
\end{align*}
Notice that applications of $\displaystyle(-\Delta_{2}\otimes\mathbf{1}%
+\mathbf{1}\otimes\Delta_{2})\sum_{0<i_{1}<i_{2}<n}e_{i_{1}}\otimes e_{i_{2}}$
produce terms of the form $e_{i_{1}}\otimes e_{i_{2}}\otimes\cdots\otimes
e_{i_{k-1}},\text{ where }0<i_{1}<i_{2}<\cdots<i_{k-1}<n$, then applies
$\Delta_{2}$ to one of them. This is exactly the set up in Proposition \ref{Part 3}. Since the application
of Lemma \ref{Lemma for part 3} in the proof of Proposition \ref{Part 3} requires the condition $k>3$, and since Lemma \ref{Lemma for part 3} only uses the condition $k>3$ to ensure that $\Delta
_{k-1}\left(  P\right)  $ generates no primitive terms, the proof of
Proposition \ref{Part 3} also applies to $(\Delta_{2}\otimes\mathbf{1}+\mathbf{1\otimes \Delta}_{2})\sum_{0<i_{1}<i_{2}<n}e_{i_{1}}\otimes e_{i_{2}}$ since all terms of $\mathbf{\Delta}_{2}\left(  P\right)  $ except its primitive terms are present. Thus we obtain the same types of terms as in Proposition \ref{Part 3}

Therefore, we must show that what remains behaves as in the general case of
$\Delta_{k-1}\Delta_{2}$ and produces all possible extreme terms. By expanding, one can show that
\begin{align*}
&  (-\Delta_{2}\otimes\mathbf{1}+\mathbf{1}\otimes\Delta_{2})\left(
v_{1}\otimes P+P\otimes v_{n}\right)  =\\
&  \hspace*{1in}-\sum_{0<i_{1}<i_{2}<n}e_{i_{1}}\otimes e_{i_{2}}\otimes
v_{n}+\sum_{0<i_{1}<i_{2}<n}v_{1}\otimes e_{i_{1}}\otimes e_{i_{2}},
\end{align*}
which generates every possible extreme term and only extreme terms, all with
the correct sign for $k=3$. Therefore, $(\Delta_{2}\otimes\mathbf{1}%
+\mathbf{1}\otimes\Delta_{2})\left(  v_{1}\otimes P+P\otimes v_{n+1}\right)  $
satisfies the hypothesis of Proposition \ref{Part 2}.
\end{proof}

Since the terms generated by $(\partial\Delta_{3}+\Delta_{2}\Delta_{2})(P)$
fall into classes that individually satisfy the conclusions of Proposition
\ref{Part 1}, Proposition \ref{Part 2}, and Proposition \ref{Part 3}, they
cancel each other out as before. Therefore $\Delta_{3}$ also satisfies
Relation \ref{relation} and the proof of Theorem \ref{Main Result} is complete.

\section{Generalization of Result}

Until now, we have been working with an $n$-gon $P$ whose initial and terminal
vertices are adjacent. Our result extends to situations in which the initial
and terminal vertices are non-adjacent. Suppose $v_{1}$ is the initial vertex
and $v_{t}$ is the terminal vertex, where $t>1$. Define the generalized
$\Delta_{2}^{\prime}$ to be the same as in the introduction, except that
\[
\Delta_{2}^{\prime}(P)=v_{1}\otimes P+P\otimes v_{t}+\sum_{0<i_{1}<i_{2}%
<t}e_{i_{1}}\otimes e_{i_{2}}-\sum_{n\geq i_{1}>i_{2}\geq t}e_{i_{1}}\otimes
e_{i_{2}}.
\]
Additionally, for $k>2$, define the generalized $k$-ary $A_{\infty}$-coalgebra
operation $\Delta_{k}^{\prime}\ $by
\[
\Delta_{k}^{\prime}(\sigma)=\left\{  \hspace{-.3in}
\begin{array}
[c]{cc}%
\sum\limits_{0<i_{1}<i_{2}<\cdots<i_{k}<t}e_{i_{1}}\otimes e_{i_{2}}%
\otimes\cdots\otimes e_{i_{k}}\smallskip & \\
\hspace{.5in}-\sum\limits_{n\geq i_{1}>i_{2}>\cdots>i_{k}\geq t}e_{i_{1}%
}\otimes e_{i_{2}}\otimes\cdots\otimes e_{i_{k}}, & \text{if }\sigma=P,\\
& \\
0, & \text{otherwise.}%
\end{array}
\right.
\]
Then by definition, $\Delta_{k}^{\prime}=0$ for all $k\geq max\{t,n-t+2\}.$
Now all that remains is to show that the operations $\{\Delta_{n}^{\prime
}\}_{n\geq2}$ extend to this general setting.

\begin{corollary}
\label{cor} Let $P$ be an $n$-gon with initial vertex $v_{1}$ and terminal
vertex $v_{t},$ where $t>1$. The operations $\left\{  \Delta_{k}%
^{\prime}\right\}  _{2\leq k<max\{t,n-t+2\}}$ defined above impose an
$A_{\infty}$-coalgebra structure on $\left(  C_{\ast}\left(  P\right)
,\partial\right)  .$
\end{corollary}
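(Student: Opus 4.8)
The plan is to follow the proof of Theorem \ref{Main Result} almost verbatim, exploiting the fact that the generalized operations split the boundary of $P$ into the two arcs joining the initial vertex $v_1$ to the terminal vertex $v_t$. As in that proof, Relation \ref{relation} is automatic on vertices and on edges --- the argument there never used that $v_1$ and $v_t$ are adjacent, $\Delta_k'$ still vanishes on edges for $k\geq 3$, and $\Delta_2'$ is unchanged on edges --- while the case $k=2$ at $P$ is a routine direct computation from $\partial P=e_1+\cdots+e_{n-1}-e_n$ and the new formula for $\Delta_2'(P)$. So the task is to verify Relation \ref{relation} at $P$ for every $k\geq 3$. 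The analogue of Proposition \ref{compositions} holds verbatim, so once again Relation \ref{relation} at $P$ collapses to
\[
\bigl(\Delta_{k-1}'\Delta_2'+\Delta_2'\Delta_{k-1}'+\partial\Delta_k'\bigr)(P)=0,
\]
the analogue of Relation \ref{general relation}, now with $\Delta_k'=0$ for $k\geq\max\{t,n-t+2\}$.

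The decisive observation is that the two arcs never interact in this identity. Call a tensor \emph{ascending} if each of its edge factors $e_i$ has $i<t$, and \emph{descending} if each of its edge factors has $i\geq t$. By construction each of $\Delta_2'(P)$, $\Delta_{k-1}'(P)$, and $\Delta_k'(P)$ is, term by term, a sum of ascending tensors (from the first sum in the definition) and descending tensors each carrying a minus sign (from the second sum); a higher operation $\Delta_j'$ with $j\geq 3$ kills every edge and every vertex; and $\Delta_2'$ applied to an edge $e_i$ only inserts an adjacent vertex ($v_i$ on the left or $v_{i+1}$ on the right), which preserves the ascending or descending character of the surrounding tensor. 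Hence every term appearing in the displayed identity is purely ascending or purely descending, and since every such term still carries at least two edge factors --- all of index $<t$ in the first case, all of index $\geq t$ in the second --- no ascending term can cancel a descending term. It therefore suffices to check that the ascending terms cancel among themselves and, separately, that the descending terms do.

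For the ascending terms there is nothing new to prove: the ascending sub-sums are literally the sums $\sum_{0<i_1<\cdots<i_k<t}$ defining the operations of a $t$-gon, so Lemmas \ref{Lemma for part 1} and \ref{Lemma for part 3} and Propositions \ref{Part 1}, \ref{Part 2}, and \ref{Part 3} apply word for word with $n$ replaced by $t$, and the four classes of attached and extreme ascending terms cancel in pairs exactly as before. For the descending terms one runs the mirror-image argument: interchange ``increasing'' and ``decreasing'', swap the roles of the extreme vertices (now $v_n$ is minimal and $v_t$ terminal for this arc), and carry the global sign coming from the minus sign in front of the second sum. Re-deriving the descending analogues of the lemmas and propositions then forces the descending left-attached, right-attached, extreme unattached, and extreme singly attached terms to cancel in pairs, which completes the verification.

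The one step I expect to cause real trouble is the sign bookkeeping for the descending family. There the tensor factors occur in \emph{decreasing} order of index, the edge $e_n$ --- with its anomalous boundary $\partial e_n=v_n-v_1$ --- plays the role of the leading edge of the arc, and there is an overall minus sign; one must verify that these three contributions combine, through the Sign Commutation Rule, to reproduce exactly the signs stated in the descending analogues of Propositions \ref{Part 1}, \ref{Part 2}, and \ref{Part 3}. A tempting shortcut is to make the relabeling $v_i\mapsto v_{n+2-i}$, $e_i\mapsto e_{n+1-i}$ explicit and thereby identify the descending part with a twisted copy of the standard structure of Theorem \ref{Main Result} on an abstract $(n-t+2)$-gon; but this relabeling negates the boundary operator and permutes the two terms of $\Delta_2$ on each edge, so one still has to reconcile those twists with the standard sign conventions before quoting the theorem. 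In practice it is probably cleanest to redo the bookkeeping directly, as in the main proof.
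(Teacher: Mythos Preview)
Your decomposition into ascending and descending tensors is correct and the decoupling argument is sound: every non-vanishing term in the reduced relation carries at least two edge factors, all with index $<t$ or all with index $\geq t$, so the two families cannot cancel against each other and may be treated separately. The ascending half is indeed handled verbatim by the proof of Theorem~\ref{Main Result} with $n$ replaced by $t$, and the descending half can in principle be done by the mirror-image analysis you describe, though as you rightly flag, the combination of reversed ordering, the anomalous edge $e_n$, and the global minus sign makes the bookkeeping unpleasant.

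The paper sidesteps this bookkeeping entirely. Rather than working at the level of tensors, it introduces an auxiliary edge $e_0$ from $v_1$ to $v_t$ and lets $P_1$ and $P_2$ be the two honest sub-polygons so created, each with $v_1$ initial, $v_t$ terminal, and $e_0$ playing the role of the single ``back'' edge. Both $P_1$ and $P_2$ then satisfy the hypotheses of Theorem~\ref{Main Result} on the nose, and since the back edge never appears in any $\Delta_k$, one checks directly that $\Delta_k(P_1)+\Delta_k(P_2)=\Delta_k'(P)$ and $\partial(P_1)+\partial(P_2)=\partial(P)$. Relation~\ref{relation} on $P$ is then the sum of the already-verified relations on $P_1$ and $P_2$, with no new sign analysis required. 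This is essentially the ``relabeling shortcut'' you contemplate and then back away from: by realizing the descending arc as a genuine polygon $P_2$ rather than as a formal reindexing, the twists in $\partial$ and $\Delta_2$ that worried you are absorbed into the geometry, and Theorem~\ref{Main Result} applies without modification. Your route works, but the paper's is shorter and avoids exactly the difficulty you identified as the sticking point.
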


\begin{proof}
Draw an additional edge from $v_{1}$ to $v_{t}$ and denote it by $e_{0}.$
Define $P_{1}$ to be the polygon with vertices $v_{1}, v_{2}, \ldots, v_{t}$
oriented counterclockwise and let $P_{2}$ to be the polygon with vertices
$v_{t}, v_{t+1}, \ldots, v_{n}, v_{1}$ oriented counterclockwise as
illustrated in Figure 4.

\begin{center}
\hspace{.7in} \includegraphics[height = 5cm]{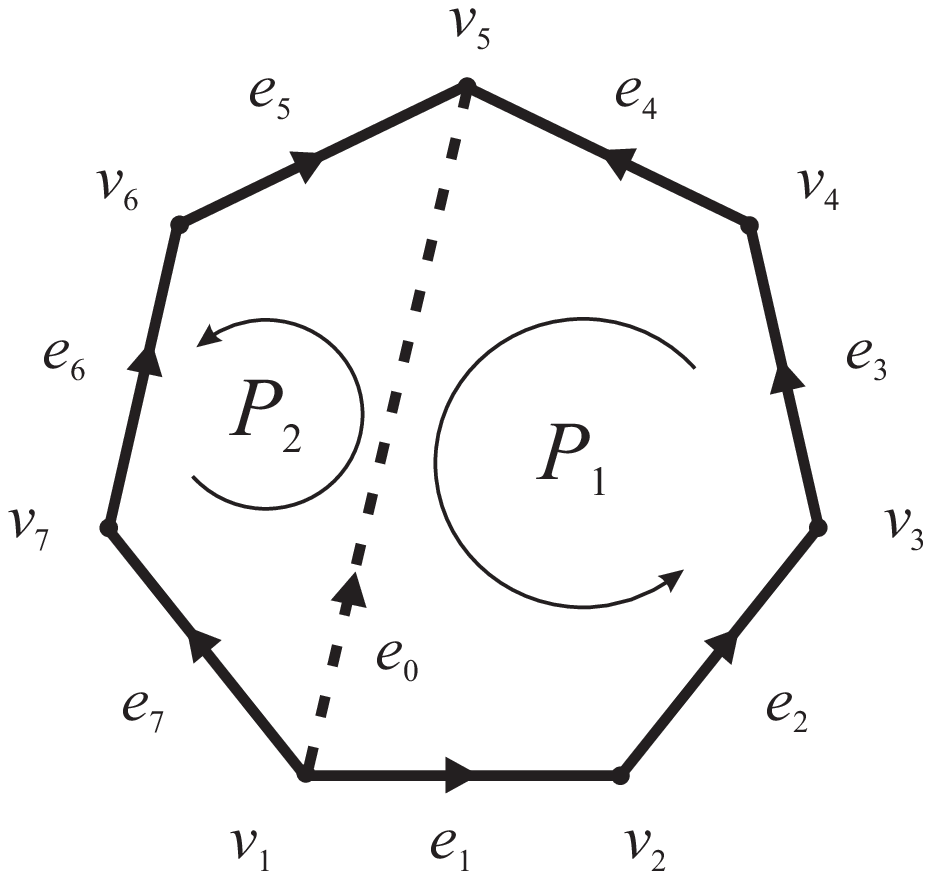} \newline Figure 4.
A $7$-gon with $v_{t} = v_{5}.$
\end{center}

Then by the way edges are directed with respect to the orientation, we have
$\partial(P_{1})=e_{1}+e_{2}+\cdots+e_{t-1}-e_{0}$ and $\partial(P_{2}%
)=-e_{t}-e_{t+1}-\cdots-e_{n}+e_{0}.$ Note that $\partial(P_{1})+\partial
(P_{2})=\partial(P).$ Furthermore, define $v_{1}$ to be the initial vertex and
$v_{t}$ to be the terminal vertex in both $P_{1}$ and $P_{2};$ then $P_{1}$
and $P_{2}$ satisfy the hypothesis of Theorem \ref{Main Result} and it is
straightforward to show that $\Delta_{2}(P_{1})+\Delta_{2}%
(P_{2})=\Delta_{2}^{\prime}(P)$ and $\Delta_{k}(P_{1})+\Delta_{k}%
(P_{2})=\Delta_{k}^{\prime}(P)$ for $k>2$. All that remains is to verify that
Relation \ref{relation} holds on $P$ for all $k\geq2$, which can be done using
the relations above and applying the main theorem to each part. The details
are left to the reader. Therefore, the operations $\{\Delta_{n}^{\prime
}\}_{n\geq2}$ defined on $P$ above satisfy all $A_{\infty}$-coalgebra
relations on cellular chains of $P$.
\end{proof}

\section{Application to closed compact surfaces}

The celebrated classification of closed compact surfaces (cf. \cite{Massey},
for example) states that a closed compact surface of genus $g,$ denoted by
$X_{g},$ is homeomorphic to a sphere with $g\geq 0$ handles when 
orientable or a connected sum of $g\geq 1$ real projective planes when
unorientable. 

To obtain the connected sum $X\#Y$ of two surfaces $X$ and $Y,$ remove the interior
of a disk from $X$ and from $Y$ then glue the two surfaces together along their boundaries. Of course, a sphere with $g\geq 1$ handles 
is the connected sum of $g$ tori, and a Klein bottle is the connected sum of two real projective planes.

\begin{center}

\centering
\includegraphics[
height=2.0in,
width=4.0in
]{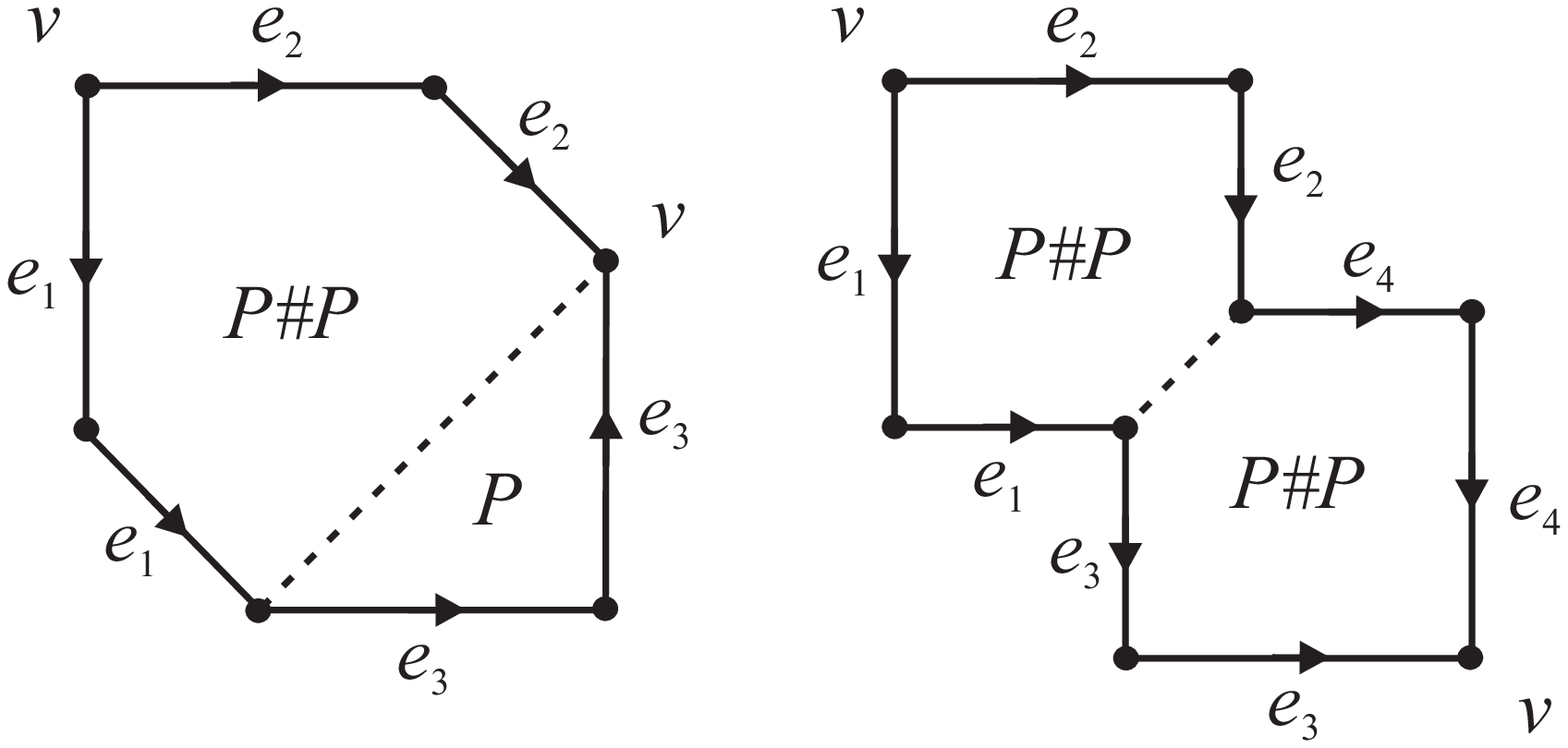}

Figure 5. A polygonal decomposition of the connected sum of three and four real projective planes.

\medskip

\centering
\includegraphics[
height=2.6541in,
width=2.6342in
]{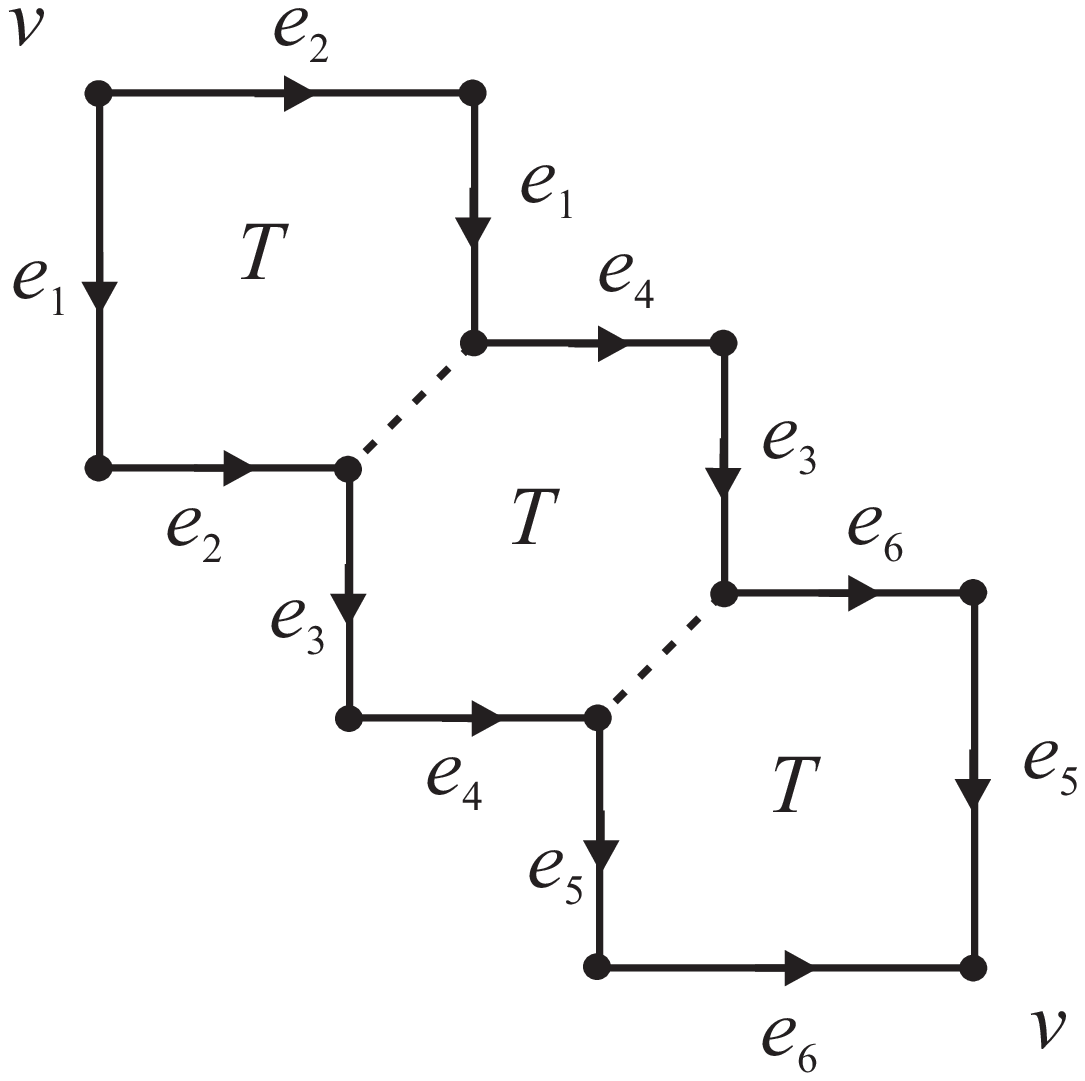}

Figure 6. A polygonal decomposition of the connected sum of three tori.\bigskip
\end{center}

Furthermore, when $g\geq 1,$ $X_{g}$ can be expressed
as the quotient of a $4g$-gon when orientable or
a $2g$-gon when unorientable as pictured in Figures 5 and 6 (the dotted lines represent common boundaries in the connected sums).  To recover $X_g$ from a polygonal decomposition $P_g$, glue the edges with the same
label together so that the arrows directing the edges align. This gluing operation defines the projection $p:P_g\to X_g$, which encodes the topology of $X_g$.

The cutting and pasting procedures indicated in Figures 5 and 6 can be
continued indefinitely, and the directed edges of a particular polygonal decomposition $P_{g}$
so obtained define a vertex poset with initial and terminal vertices labeled
$v$ (all vertices are identified in $X_g$). These are exactly the configurations to
which Corollary \ref{cor} applies.

Indeed, the formal $A_\infty$-coalgebra structure on $C_*(P_g)$ given by Corollary \ref{cor} projects to a quotient
structure on $C_{\ast}\left(  X_{g}\right)  $ in the obvious way. If $X_3$ is the connected sum of three real projective planes, for example, using the decomposition in Figure 5 we obtain an $A_{\infty}$-coalgebra structure on $C_{\ast
}\left(  X_3\right)  $ with  non-trivial operations
\begin{align*}
\Delta_{2}\left(  X_3\right)   &  =v\otimes X_3+X_3\otimes v+e_{1}\otimes
e_{1}-e_{2}\otimes e_{2}+e_{3}\otimes e_{3}\\
\Delta_{4}\left(  X_3\right)   &  =e_{1}\otimes e_{1}\otimes e_3\otimes e_{3}.
\end{align*}
Furthermore, each cellular chain in
$ C_{\ast}\left(  P_{g}\right)\otimes\mathbb{Z}_2  $ projects to a non-bounding cycle in $ C_{\ast}\left(  X_{g}\right)\otimes\mathbb{Z}_2  $ so that 
$H_{\ast}\left(  X_{g};\mathbb{Z}_{2}\right)  =C_{\ast}\left(
X_{g}\right)  \otimes\mathbb{Z}_{2}$.  

For a general unorientable $X_{g}$, label the edge of the $i^{th}$ real
projective plane $e_{i}$, and label the vertex $v.$ Let $\left\lfloor
x\right\rfloor $ denote the floor of $x$, and for a given $s$ define the
sequences
\[
\left\{  i_{p}=2\left\lfloor \tfrac{p+1}{2}\right\rfloor -1\right\}
_{p=1}^{2s}\text{ and }\left\{  j_{q}=2\left\lfloor \tfrac{q+1}{2}%
\right\rfloor \right\}  _{q=1}^{2s}.
\]

\begin{corollary}
\label{unorientable} Let $X_{g}$ be a closed compact unorientable surface of
genus $g\geq2$ and let $P_{g}$ be the polygonal decomposition of $X_{g}$
indicated in Figure 5. The formal $A_{\infty}$-coalgebra structure on
$C_{*}(P_{g})$ projects to a non-trivial $A_{\infty}$-coalgebra structure on
$C_{\ast}\left(  X_{g}\right)  $ with operations $\left\{  \Delta_{k}\right\}
_{k\geq2}$ determined by the quotient map
$q:P_{g}\to X_{g}$ and defined by\medskip

$\Delta_{2}\left(  v\right)  =v\otimes v$\medskip

$\Delta_{2}\left(  e_{i}\right)  =v\otimes e_{i}+e_{i}\otimes v$

$\Delta_{2}(X_{g})=v\otimes X_{g}+X_{g}\otimes v+%
{\displaystyle\sum\limits_{\substack{i=1\\g\in\left\{  2s-1,2s\right\}  }%
}^{s}}
e_{2i-1}\otimes e_{2i-1}-%
{\displaystyle\sum\limits_{\substack{j=1\\g\in\left\{  2s,2s+1\right\}  }%
}^{s}}
e_{2j}\otimes e_{2j}$\medskip

$\Delta_{k}(X_{g})=%
{\displaystyle\sum\limits_{\substack{0< p_{1}<\cdots<p_{k}\leq2s\\g\in\left\{
2s-1,2s\right\}  }}}
e_{i_{p_{1}}}\otimes\cdots\otimes e_{i_{p_{k}}}-%
{\displaystyle\sum\limits_{\substack{0< q_{1}<\cdots<q_{k}\leq2s\\g\in\left\{
2s,2s+1\right\}  }}}
e_{j_{q_{1}}}\otimes\cdots\otimes e_{jq_{k}},$ $k\geq3$\medskip

$\Delta_{k}(\sigma)=0,\,\ $for all $\sigma\neq X_{g}$ and $k\geq3.$\medskip
\end{corollary}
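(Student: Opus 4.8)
The plan is to derive Corollary \ref{unorientable} directly from Corollary \ref{cor} by unwinding the quotient map $q:P_g\to X_g$ on cellular chains. First I would record the precise polygonal decomposition being used: for a connected sum of $g$ real projective planes the surface $X_g$ is the quotient of a $2g$-gon whose boundary word, read counterclockwise, is $a_1a_1a_2a_2\cdots a_ga_g$; in the notation of the corollary the $i^{\text{th}}$ pair of edges both carry the label $e_i$, and under $q$ all $2g$ vertices are identified to the single vertex $v$. The chain map $q_*:C_*(P_g)\to C_*(X_g)$ then sends each vertex to $v$, sends the two boundary edges labeled $e_i$ to the single edge $e_i$ (with a sign determined by whether the second occurrence is traversed consistently with orientation — this is exactly the $\pm$ bookkeeping already visible in $\Delta_2'$ and $\Delta_k'$ of Corollary \ref{cor}), and sends the $2$-cell $P_g$ to $X_g$. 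Since $q_*$ is a surjective chain map, pushing the operations forward is legitimate provided they are well defined on the quotient, i.e. provided $q_*\Delta_k'$ factors through $q_*$; this holds because $\Delta_k'$ is built entirely out of the distinct geometric edges $e_{i_1},\dots,e_{i_k}$ with $i_1<\cdots<i_k$, and distinct geometric edges of $P_g$ map to (possibly equal) edges of $X_g$, so the image is well defined.

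Second, I would carry out the explicit substitution. Corollary \ref{cor} gives, for the $2g$-gon with initial vertex $v_1$ and terminal vertex $v_t$ (here $t$ is the position where the "first half" of the boundary word ends, so $t=g+1$ when $g$ is even and one reindexes appropriately when $g$ is odd — this is the source of the floor functions and the two index sequences $\{i_p\}$, $\{j_q\}$), the operations $\Delta_2'$ and $\Delta_k'$ as sums over increasing (resp. decreasing) index tuples. Applying $q_*$ collapses the geometric edge subscripts to the labels $e_i$: the increasing run $e_{i_1}\otimes\cdots\otimes e_{i_k}$ over $0<i_1<\cdots<i_k<t$ becomes $\sum e_{i_{p_1}}\otimes\cdots\otimes e_{i_{p_k}}$ over $0<p_1<\cdots<p_k\le 2s$ once one records which original edge subscripts carry which label $e_i$ — this is precisely the content of the definition $i_p=2\lfloor(p+1)/2\rfloor-1$, which lists the odd-labeled positions, while $j_q=2\lfloor(q+1)/2\rfloor$ lists the even-labeled positions. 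The decreasing run in the second sum of $\Delta_k'$ picks up the overall minus sign exactly as written. For $\Delta_2$, the $v_1\otimes P+P\otimes v_n$ terms become $v\otimes X_g+X_g\otimes v$, and the two edge sums collapse to $\sum e_{2i-1}\otimes e_{2i-1}-\sum e_{2j}\otimes e_{2j}$, where a diagonal pairing $e_i\otimes e_i$ survives precisely because both geometric edges labeled $e_i$ occur in the appropriate monotone run (one checks this against the boundary word $a_1a_1a_2a_2\cdots$). The case restrictions $g\in\{2s-1,2s\}$ versus $g\in\{2s,2s+1\}$ simply encode, for a fixed $g$, the value $s=\lceil g/2\rceil$ governing how many odd- and even-indexed edge pairs actually appear in each of the two monotone runs.

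Third, I would verify that the pushed-forward operations still satisfy Relation \ref{relation}. This is automatic: $q_*$ is a chain map, so $q_*$ applied to the structure relation for $\{\Delta_k'\}$ on $C_*(P_g)$ (valid by Corollary \ref{cor}) yields the structure relation for $\{q_*\Delta_k'\circ (\text{section})\}$ on $C_*(X_g)$ — more carefully, since $q_*$ is surjective with a vertex/edge/face-by-face description, one checks that the diagram relating $\Delta_k$ on $X_g$ to $\Delta_k'$ on $P_g$ via $q_*$ (and $q_*^{\otimes k}$) commutes, which reduces verifying Relation \ref{relation} on $X_g$ to the already-proved relation on $P_g$. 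Finally, non-triviality: one exhibits a single surviving higher monomial, e.g. for $g\ge 3$ the term $e_{i_{p_1}}\otimes\cdots$ with $k=\lfloor (g+1)/2\rfloor\ge 2$ giving some $\Delta_k(X_g)\ne 0$ with $k\ge 3$ (the explicit $\Delta_4(X_3)=e_1\otimes e_1\otimes e_3\otimes e_3$ computed above is the base case), while for $g=2$ (the Klein bottle) one notes there is only one odd label $e_1$ and one even label $e_2$ available, so no $\Delta_k$ with $k\ge 3$ can have a strictly increasing length-$k$ run — hence the higher structure is trivial, consistent with the paper's abstract.

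The main obstacle I expect is the indexing bookkeeping in the two substep: matching Corollary \ref{cor}'s generalized operations $\Delta_k'$ — stated for a $2g$-gon with a specific choice of initial and terminal vertex — against the closed formula with the floor-function sequences $\{i_p\},\{j_q\}$ requires pinning down exactly which geometric edge of the $2g$-gon receives which label $e_i$ under the decomposition in Figure 5, and then checking that the monotone conditions $0<i_1<\cdots<i_k<t$ and $n\ge i_1>\cdots>i_k\ge t$ translate correctly into the conditions $0<p_1<\cdots<p_k\le 2s$. The signs (the single overall minus on the second sum, and the $-e_{2j}\otimes e_{2j}$ in $\Delta_2$) are inherited verbatim from $\Delta_2'$ and $\Delta_k'$, so no new sign analysis is needed; the real work is purely combinatorial relabeling, which I would present as a short lemma identifying $q\circ(\text{edge labeling})$ rather than as an inline computation.
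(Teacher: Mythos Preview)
Your approach is essentially the paper's own: the paper does not give a separate proof of Corollary~\ref{unorientable} but simply asserts that the $A_\infty$-coalgebra structure of Corollary~\ref{cor} on $C_*(P_g)$ ``projects to a quotient structure on $C_*(X_g)$ in the obvious way,'' illustrates this with the explicit $X_3$ computation, and then records the general formulas. Your plan---push forward $\Delta_k'$ along the surjective chain map $q_*$, translate the monotone index conditions into the floor-function sequences $\{i_p\},\{j_q\}$, and observe that Relation~\ref{relation} is inherited from $P_g$---is exactly this projection made explicit, and your identification of the index bookkeeping as the only real obstacle matches where the paper leaves the details to the reader.
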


For a general orientable $X_{g}$, label the edges along one edge-path from
initial to terminal vertex $e_{1},e_{2},\ldots,e_{2g}.$ Let $\hat{e}%
_{i_{2k-1}}=e_{i_{2k}}$ and $\hat{e}_{i_{2k}}=e_{i_{2k}-1},$ and label the
edges along the other edge-path $\hat{e}_{1},\hat{e}_{2},\ldots,\hat{e}_{2g}.$

\begin{corollary}
\label{orientable}
Let $X_{g}$ be a closed compact orientable surface of genus $g\geq2$ and let
$P_{g}$ be the polygonal decomposition of $X_{g}$ indicated in Figure 6. The
formal $A_{\infty}$-coalgebra structure on $C_{*}(P_{g})$ projects to a
non-trivial $A_{\infty}$-coalgebra structure on $C_{\ast}\left(  X_{g}\right)
$ with operations $\left\{  \Delta_{k}\right\}  _{k\geq2}$ determined by the
the quotient map $q:P_{g}\to X_{g}$ and defined
by\medskip

$\Delta_{2}\left(  v\right)  =v\otimes v$\medskip

$\Delta_{2}\left(  e_{i}\right)  =v\otimes e_{i}+e_{i}\otimes v$

$\Delta_{2}(X_{g})=v\otimes X_{g}+X_{g}\otimes v+%
{\displaystyle\sum\limits_{i=1}^{g}}
e_{2i-1}\otimes e_{2i}-e_{2i}\otimes e_{2i-1}$\medskip

$\Delta_{k}(X_{g})=%
{\displaystyle\sum\limits_{0<i_{1}<\cdots<i_{k}\leq2g}}
e_{i_{1}}\otimes\cdots\otimes e_{i_{k}}-\hat{e}_{i_{1}}\otimes\cdots
\otimes\hat{e}_{i_{k}},$ $k\geq3$\medskip

$\Delta_{k}(\sigma)=0,$ for all $\sigma\neq X_{g}$ and $k\geq3.$\medskip
\end{corollary}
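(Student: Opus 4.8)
The strategy is to reduce everything to Corollary \ref{cor} by pulling the operations back along the quotient map. First I would set up the combinatorial bookkeeping: in the $4g$-gon $P_g$ of Figure 6, the $4g$ boundary edges come in $2g$ pairs, each pair identified in $X_g$ to one of the edges $e_1,\dots,e_{2g}$; the two edge-paths from the initial vertex $v_1$ to the terminal vertex $v_t$ traverse these identified edges in the two orders $e_1,e_2,\dots,e_{2g}$ and $\hat e_1,\hat e_2,\dots,\hat e_{2g}$ recorded just before the statement. Because $X_g$ arises from the iterated cutting-and-pasting described in the text, $P_g$ is exactly one of the polygons to which Corollary \ref{cor} applies, so $(C_*(P_g),\partial,\Delta_2',\Delta_3',\dots)$ is an $A_\infty$-coalgebra. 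The quotient map $q:P_g\to X_g$ induces a surjection $q_*:C_*(P_g)\to C_*(X_g)$ sending both edges of a pair to the common $e_i$, all $4g$ vertices to $v$, and the region to $X_g$.

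Next I would verify that the formal structure on $C_*(P_g)$ descends through $q_*$, i.e. that $q_*$ is a map of $A_\infty$-coalgebras onto a well-defined structure on $C_*(X_g)$. Concretely one defines $\Delta_k$ on $C_*(X_g)$ by the stated formulas and checks $\Delta_k\circ q_* = q_*^{\otimes k}\circ\Delta_k'$ for every $k\geq 2$; granting this, the structure relations (Relation \ref{relation}) for $\{\Delta_k\}$ on $C_*(X_g)$ follow by applying $q_*^{\otimes n}$ to the relations for $\{\Delta_k'\}$ on $C_*(P_g)$, which hold by Corollary \ref{cor}, together with the fact that $q_*$ is a chain map. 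The computation of $\Delta_2$ is the generalized Kravatz diagonal $\Delta_2'(P)$ of Corollary \ref{cor} with its two edge-runs $0<i_1<i_2<t$ and $n\geq i_1>i_2\geq t$ rewritten under the identifications: the ascending run projects to $\sum e_{2i-1}\otimes e_{2i}$ and the descending run, after relabeling via $\hat e$, projects to $-\sum e_{2i}\otimes e_{2i-1}$, giving the displayed $\Delta_2(X_g)$; the $v\otimes X_g + X_g\otimes v$ terms come from $v_1\otimes P + P\otimes v_t$ since all vertices map to $v$. For $k\geq 3$, $\Delta_k'(P)$ is the signed difference of the two strictly-monotone sums of edge tensors along the two edge-paths, which project termwise to $\sum_{0<i_1<\dots<i_k\leq 2g} e_{i_1}\otimes\cdots\otimes e_{i_k} - \hat e_{i_1}\otimes\cdots\otimes\hat e_{i_k}$, and $\Delta_k'$ vanishes on everything but $P$, so $\Delta_k$ vanishes off $X_g$; this also matches the vanishing bound $k\geq\max\{t,n-t+2\}$ from Corollary \ref{cor}. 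Finally, non-triviality: one exhibits a specific $k\geq 3$ for which $\Delta_k(X_g)\neq 0$ — for $g\geq 2$ the two edge-paths have length $2g\geq 4$, so $\Delta_3(X_g)$ contains terms not cancelled by the $\hat e$ sum (the two orderings $e_1,\dots,e_{2g}$ and $\hat e_1,\dots,\hat e_{2g}$ are genuinely different when $g\geq 2$), hence higher structure survives.

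The main obstacle is the step that the formal $A_\infty$-coalgebra structure on $C_*(P_g)$ actually passes to the quotient: this requires that $q_*^{\otimes k}\Delta_k'$ be independent of the choice of preimage, equivalently that $\Delta_k'$ applied to the two edges of an identified pair give tensors with the same $q_*^{\otimes k}$-image — the geometric content of "projects to a quotient structure in the obvious way." I would handle this by noting that each $\Delta_k'(P)$ is supported on the region $P$ (not on the glued edges or vertices individually), so the only identifications that matter are those among the tensor factors $e_{i_1}\otimes\cdots\otimes e_{i_k}$, and one checks directly from the explicit edge-path labelings that the ascending and descending index sets map onto the claimed sums with matching signs. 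Once this compatibility is in hand, the relations on $X_g$ are a formal consequence and the remaining verifications are the routine index translations and a single non-vanishing example, which I would leave to the reader in the same spirit as the rest of the paper.
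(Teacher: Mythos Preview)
Your overall strategy---apply Corollary \ref{cor} to $P_g$ and push the operations forward along $q_*$---is exactly what the paper intends; the paper states this corollary without a separate proof, indicating only that the structure on $C_*(P_g)$ ``projects to a quotient structure on $C_*(X_g)$ in the obvious way.'' Your treatment of the compatibility $\Delta_k\circ q_*=q_*^{\otimes k}\circ\Delta_k'$, of the $k\ge3$ formula, and of non-triviality is in line with that.

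There is, however, a real computational slip in your verification of $\Delta_2(X_g)$. You claim that ``the ascending run projects to $\sum e_{2i-1}\otimes e_{2i}$,'' but that is false: the ascending sum $\sum_{0<i_1<i_2<t} e_{i_1}\otimes e_{i_2}$ in $\Delta_2'(P)$ has $\binom{2g}{2}$ terms, and since the edges along that path project bijectively onto $e_1,\dots,e_{2g}$, it projects to the full sum $\sum_{0<i_1<i_2\le 2g} e_{i_1}\otimes e_{i_2}$. Likewise the descending run projects to $-\sum_{0<j_1<j_2\le 2g}\hat e_{j_1}\otimes\hat e_{j_2}$. The displayed formula for $\Delta_2(X_g)$ emerges only after a cancellation you have omitted: using $\hat e_{2k-1}=e_{2k}$ and $\hat e_{2k}=e_{2k-1}$, any pair $(j_1,j_2)$ with $j_1<j_2$ lying in \emph{distinct} blocks $\{2a-1,2a\}$ and $\{2b-1,2b\}$ is sent by the hat-involution to another such cross-block pair, so those terms cancel between the two runs; what survives are precisely the within-block pairs, giving $\sum_{i=1}^g\bigl(e_{2i-1}\otimes e_{2i}-e_{2i}\otimes e_{2i-1}\bigr)$. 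This cancellation is the actual content of the $\Delta_2$ computation and needs to be spelled out; once you fix this step, the rest of your outline goes through as written.
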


\begin{remark}
It is interesting to note that our definitions of $\Delta_2$ in Corollaries \ref{unorientable} and \ref{orientable} allow us to read off the cup product on $H_*\left(X_g;\mathbb{Z}_2\right)$ directly from the components of $\Delta_2$ without performing additional calculations. In general, one has a choice: Compute cup products using a standard diagonal at the expense of long calculations or construct an application-specific diagonal that minimizes the calculations at the expense of the accompanying combinatorial difficulties.  
\end{remark}

Finally, when $X_g$ is the sphere $S^2$, the $A_{\infty}%
$-coalgebra structure on $C_{\ast}\left( X_g\right)  $ is clearly degenerate; if $X_g$ is a real projective plane, a torus, or a Klein bottle, there is one non-vanishing $A_{\infty}$-coalgebra operation on $C_{\ast}\left(  X_g \right)$, namely $\Delta_2$, which induces the non-trivial cup product in cohomology (the formula for $\Delta_2$ on a real projective plane requires independent verification; the proof is straight-forward and omitted). But if $X_g$ is a closed compact surface that is orientable with $g\geq2$ or unorientable with $g\geq3$,
the combinatorial homotopy coassociative diagonal on $C_*(P_g)$ induces a formal $A_{\infty}$-coalgebra structure that projects to a strictly coassociative  $A_{\infty}$-coalgebra structure on $C_{\ast}\left( X_g \right)$ with non-trivial higher order structure determined by the quotient map $q$. But whether or not the $A_{\infty}$-coalgebra structure
$(H=H_{\ast}\left(  X_{g};\mathbb{Z}_{2}\right)  ,\Delta_{k})$ observed here
is topologically invariant is an open question.

\end{document}